\newtheorem{thm}{Theorem}[section] 
\newtheorem{prop}[thm]{Proposition} 
\newtheorem{lem}[thm]{Lemma}
\theoremstyle{definition}
\theoremstyle{remark} 
\newtheorem{rem}{Remark}[section]  
\def\eqref#1{(\ref{#1})} 
\newcommand {\mat}      [1] {\left[\begin{array}{#1}}
\newcommand {\rix}          {\end{array}\right]}
\newcommand {\de}      [1] {\left|\begin{array}{#1}}
\newcommand {\nt}          {\end{array}\right|}
\newcommand{\bstar}       {\begin{eqnarray*}}
\newcommand{\estar}       {\end{eqnarray*}}
\newcommand{\eqn}       {\begin{eqnarray}}
\newcommand{\enn}       {\end{eqnarray}}
\newcommand{\eq}[1]   {\begin{equation}\label{#1}}
\newcommand{\en}      {\end{equation}}
\begin{document}
\begin{titlepage}  
\title{Ion Flow Dynamics with Higher-Order Permanent Charge}  
\author[1,2]{Hamid Mofidi\thanks{\tt h.mofidi@bimsa.cn}}
\affil[1]{Beijing Institute of Mathematical Sciences and Applications (BIMSA), Beijing 101408, China}
\affil[2]{Yau Mathematical Sciences Center, Tsinghua University, Beijing 100084, China}

\date{}
\end{titlepage}

\maketitle

{\textbf{Abstract.}}
   \smallskip
In this research, we explore how permanent charges affect the movement of ionic currents through ion channels. We use a quasi-one-dimensional classical Poisson–Nernst–Planck (PNP) model to study two types of ions, one positively charged and the other negatively charged. The distribution of permanent charges is simple, with zero values at both ends and a constant permanent charge in the middle region. We treat the classical PNP model as a boundary value problem (BVP) of a singularly perturbed system.
The solution to the BVP, in the presence of a small permanent charge exhibits a regular dependence on this parameter. We conduct a systematic perturbation analysis for the singular solution, focusing on understanding higher-order effects arising from permanent charges. This analysis uncovers a subtle interplay between boundary conditions and channel geometry, determining the overall impact.
 \\

{\bf Keywords:} Permanent charge, Ionic flows,  PNP, Channel geometry

\smallskip

{\bf AMS Subject Classification:} 34B16, 78A35,  34A26, 92C35

\vspace{5mm}

\section{Introduction.}\label{Intro}
\setcounter{equation}{0}

Ion channels are membrane proteins that allow ions to cross cell membranes, creating electric signals that control various biological processes. The structure of ion channels, which have cylindrical shapes with variable cross-sections and permanent charges from amino acid side chains, is key to their properties \cite{HH52,HK55}. The study of ion channels involves both their structure and ionic flow properties. The channel's permanent charge depends on the distribution of amino acid side chains along its cylindrical shape \cite{Eis, Eis1,MGNHEB09}.

The Poisson-Nernst-Planck (PNP) model stands out as one of the most commonly utilized mathematical frameworks for studying ion channels. This model takes into account the interplay between structural characteristics and physical parameters, and researchers have extensively examined it using a geometric singular perturbation approach. Through the application of this approach, the PNP model can be simplified into an algebraic system referred to as the governing system. Analyzing this governing system unveils crucial properties of ion channels, providing valuable insights for informed design and optimization across various applications \cite{Eis4, EL07, FLMZ22}.

The effects of permanent charge on ionic flows have been investigated by several studies using the PNP model, with both analytical and numerical methods. Liu et al. \cite{JEL19,Liu18} examined the flux ratios and ion channel structures via PNP, and analyzed how they influence the fluxes, boundary concentrations, and electric potentials of the system. Other papers explored the reversal potential and permanent charge under unequal diffusion coefficients, and derived universal properties of the system \cite{ELM19, M20, M21, M23, ML20} or numerically studied the permanent charge effects on flux ratios, revealing new phenomena and qualitative changes \cite{LTZ12,MEL20}. These studies enhance the understanding of the channel geometry and the role of permanent charge in ion channel dynamics.

This manuscript explores the influence of permanent charge on ion channel dynamics through a combination of theoretical and numerical approaches. The zeroth and first order solutions in permanent charge, outlined in \cite{JLZ15}, are revisited to facilitate higher-order analyses. Analytical expressions for the computationally intricate second order solutions are derived. Additionally, novel insights are introduced based on numerical investigations of both linear and quadratic solutions.

The paper follows this structure: Section \ref{sec-setup} introduces the classical Poisson–Nernst–Planck (PNP) model for ion channels and establishes a quasi-one-dimensional electrodiffusion model in Section \ref{sec-Quasi1}, considering two types of ions with different charges and a simple distribution of permanent charge. Section \ref{sec-dimless} transforms the model into a dimensionless form for simplified analysis. Section \ref{sec-Gov} presents the governing system for the boundary value problem (BVP). In Section \ref{sec-expansion}, the singular solutions in the presence of small permanent charge are analyzed, exploring higher-order effects. Section \ref{sec-zeroth-first} and Section \ref{sec-second} respectively delve into the zeroth, first, and second-order solutions and their implications for system behavior. Notably, Section \ref{sec-second} introduces new analytical results for second-order solutions in $Q_0$. Section \ref{sec-num} provides computational outcomes for first and second-order solutions in $Q_0$ and numerically investigates the impact of permanent charge on fluxes and I-V relations, revealing the intricate interplay between permanent charge, boundary conditions, and channel geometry. Section \ref{sec-num01} and Section \ref{sec-num2} respectively focus on first and second-order effects. Finally, Section \ref{sec-conclusion} concludes the manuscript, summarizing the main results, discussing implications, and suggesting directions for future research.

\section{PNP Systems for Ion Channels: Setup and Key Results.}\label{sec-setup}
\setcounter{equation}{0}

PNP systems, essential for studying ionic flows, originate from molecular dynamic models \cite{SNE}, Boltzmann equations \cite{Bar}, and variational principles \cite{HEL10, HLE12}. Advanced coupling with Navier–Stokes equations \cite{Bie11,CEJS95,SR81} and rigorous establishment of the Onsager reciprocal law \cite{GO68} offer sophisticated insights, striking a balance between accuracy and analytical/computational challenges, supported by reviews and model comparisons \cite{IR02,RABI04}. 

Building upon this foundation, we further streamline PNP models, especially for ion channels with narrow cross-sections relative to lengths, resulting in quasi-one-dimensional models \cite{NE98}.
This reduction yields quasi-one-dimensional models \cite{NE98}, with rigorous justification provided in \cite{LW10}. The streamlined approach addresses both accuracy and analytical/computational challenges.

This section provides a detailed exposition of our mathematical model for ionic flows, focusing on the essential setup and key results. Specifically, we explore a quasi-one-dimensional Poisson-Nernst-Planck (PNP) model that characterizes ion transport within a confined channel featuring a permanent charge. To ensure clarity in our subsequent analysis, we introduce notation and assumptions consistently used throughout the paper. Moreover, we review relevant findings from previous literature, such as \cite{EL07, Liu09}, serving as crucial foundations for our contributions outlined in the following sections.

\subsection{A Quasi-One-Dimensional PNP Model.}\label{sec-Quasi1}
Our analysis is based on a   quasi-one-dimensional PNP model first proposed in \cite{NE98} and, for a special case, rigorously justified in \cite{LW10}.
For a mixture of  $n$ ion species,
a quasi-one-dimensional    PNP  model   is
\begin{align}\begin{split}\label{ssPNP}
&\frac{1}{A(X)}\frac{d}{dX}\Big(\varepsilon_r(X)\varepsilon_0A(X)\frac{d\Phi}{dX}\Big)=-e_0\Big(\sum_{s=1}^nz_sC_s+{\mathcal{Q}}(X)\Big),\\
 & \frac{d{\mathcal{J}}_k}{dX}=0, \quad -{\mathcal{J}}_k=\frac{1}{k_BT}{\mathcal{D}}_k(X)A(X)C_k\frac{d\mu_k}{d X}, \quad
 k=1,2,\cdots, n,
\end{split}
\end{align}
where $X\in [a_0,b_0]$ is the coordinate along the axis of the channel and baths of total length $b_0-a_0$, $A(X)$ is the
 area of cross-section  of the channel over the longitudinal location $X$, $e_0$ is the elementary charge (we reserve the letter $e$ for the Euler's  number -- the base for the natural exponential function), $\varepsilon_0$ is the vacuum permittivity, $\varepsilon_r(X)$ is the relative dielectric coefficient, ${\mathcal{Q}}(X)$ is the permanent charge density, $k_B$ is the Boltzmann constant, $T$ is the absolute temperature, $\Phi$ is the electric potential,   for the $k$th ion species, $C_k$ is the concentration, $z_k$ is the valence, ${\mathcal{D}}_k(X)$ is the diffusion coefficient, $\mu_k$ is the electrochemical potential, and ${\mathcal{J}}_k$ is the flux density.

Equipped with the system (\ref{ssPNP}),  a meaningful    boundary condition  for ionic flow through ion channels (see,   \cite{EL07} for reasoning) is, for $k=1,2,\cdots, n$,
\begin{equation}
\Phi(a_0)={\mathcal{V}}, \ \ C_k(a_0)={\mathcal{L}}_k>0; \quad \Phi(b_0)=0,  \ \
C_k(b_0)={\mathcal{R}}_k>0. \label{ssBV}
\end{equation}
In relation to typical experimental designs, the positions $X=a_0$ and $X=b_0$ are located in the baths separated by the channel and are locations for two electrodes that are applied to control or drive the ionic flow through the ion channel. 
An important measurement is the  I-V (current-voltage) relation where, for fixed ${\mathcal{L}}_k$'s and ${\mathcal{R}}_k$'s, the current ${\mathcal{I}}$ depends on the transmembrane potential (voltage) ${\mathcal{V}}$ by
${\mathcal{I}}=\sum_{s=1}^nz_s{\mathcal{J}}_s({\mathcal{V}}).
$

Certainly, the relations of individual fluxes ${\mathcal{J}}_k$ with respect to ${\mathcal{V}}$ are more informative, but, measuring them experimentally  is much more difficult.
Ideally, the experimental designs should not affect the intrinsic ionic flow properties so one would like to design the boundary conditions to meet the so-called electroneutrality 
$
\sum_{s=1}^nz_s{\mathcal{L}}_s=0=\sum_{s=1}^nz_s{\mathcal{R}}_s.
$
The reason for this is that, otherwise, there will be sharp boundary layers which cause significant changes  (large gradients) of the electric potential and concentrations near the boundaries so that a measurement of these values has non-trivial uncertainties. 
One smart design to remedy this potential problem is the ``four-electrode-design": two `outer electrodes' in the baths far away from the ends of the ion channel to provide the driving force and two `inner electrodes' in the bathes near the ends of the ion channel to measure the electric potential and the concentrations as the ``real'' boundary conditions for the ionic flow. At the inner electrodes locations, the electroneutrality conditions are reasonably satisfied, and hence, the electric potential and concentrations vary slowly and a measurement of these values would be robust. 
 The cross-section area $A(X)$ typically has the property that $A(X)$ is much smaller for $X\in (a_0,b_0)$ (the neck region of the channel) than that for $X\not\in [a_0,b_0]$.

\subsection{Dimensionless Form of the Quasi-One-Dimensional PNP Model.}\label{sec-dimless}
The following rescaling or its variations have been widely used for the convenience of mathematical analysis \cite{Gil99, JL12}.   
  Let $C_0$ be a characteristic concentration of the ion solution. 
 We now make a dimensionless re-scaling of the variables in the system (\ref{ssPNP}) as follows.
\begin{align}\label{rescale}\begin{split}
&\varepsilon^2=\frac{\varepsilon_r\varepsilon_0k_BT}{e_0^2(b_0-a_0)^2C_0},\; x=\frac{X-a_0}{b_0-a_0},\;  h(x)=\frac{A(X)}{(b_0-a_0)^2},
\;  Q(x)=\frac{{\mathcal{Q}}(X)}{C_0},  \\
&D(x)={\mathcal{D}}(X),\; \phi(x)=\frac{e_0}{k_BT}\Phi(X), \; c_k(x)=\frac{C_k(X)}{C_0}, \;  
 J_k=\frac{{\mathcal{J}}_k}{(b_0-a_0)C_0   {\mathcal{D}}_k}. 
\end{split}
\end{align}
  We assume $C_0$ is fixed but large so that the parameter $\varepsilon$ is small. Note that
 $\varepsilon=\lambda_D/(b_0-a_0)$,
   where $\lambda_D$  is the Debye screening length.
   In terms of the new variables,    the BVP (\ref{ssPNP}) and (\ref{ssBV}) becomes 
\begin{align}\label{PNP2}
\begin{split}
&\frac{\varepsilon^2}{h(x)}\frac{d}{dx}\left(h(x)\frac{d\phi}{dx}\right)=-\sum_{s=1}^nz_s
c_s -Q(x),\\
&\frac{d J_k}{dx}=0, \quad -  J_k=\frac{1}{k_BT}D(x)h(x)c_k\frac{d \mu_k}{d x},
\end{split}
\end{align}
with  boundary conditions at $x=0$ and $x=1$
\begin{align}\label{BVO}
\begin{split}
\phi(0)=&V,\; c_k(0)=L_k; \;
 \phi(1)=0,\; c_k(1)=R_k,
\end{split}
\end{align}
where 
$V:=\frac{e_0}{k_BT}{\mathcal{V}},\quad L_k:=\frac{{\mathcal{L}}_k}{C_0},\quad R_k:=\frac{{\mathcal{R}}_k}{C_0}.
$
The permanent charge $Q(x)$ is
\begin{align}\label{Q}
Q(x)=\left\{\begin{array}{cc}
0, & x\in (0,a)\cup (b,1)\\
Q_0, &x\in (a,b),
\end{array}\right.
\end{align}
where 
$
0<a=\frac{A-a_0}{a_1-a_0}<b=\frac{B-a_0}{a_1-a_0}<1.
$
We will take the ideal component $\mu_k^{id}$ only for the electrochemical potential. In terms of the new variables, it becomes
\begin{equation}\label{mu-id}
\frac{1}{k_BT}\mu_k^{id}(x)=z_k\phi(x)+\ln c_k(x).
\end{equation}
The ideal component $\mu_k^{id}(x)$ contains contributions of ion particles as point charges and ignores the ion-to-ion interaction. PNP models including ideal components are referred to as classical PNP models. 
Recall that the critical assumption is that $\varepsilon$ is small. This assumption allows us to treat the BVP (\ref{PNP2}) with (\ref{BVO}) as a singularly perturbed problem. A general framework for analyzing such singularly perturbed BVPs in PNP-type systems has been developed in prior works \cite{EL07,JLZ15} for classical PNP systems and in \cite{JL12,LM22,LTZ12} for PNP systems with finite ion sizes.

 
We now recall the result in \cite{EL07} that our work will be based on.   For $n=2$ with $z_1>0>z_2$,  the authors of \cite{EL07} applied geometric singular perturbation theory to construct  the singular orbit of the BVP  (\ref{PNP2}) and  (\ref{BVO}). The BVP is then reduced to a connecting problem: finding an orbit from
$
B_0=\{(V,u,L_1,L_2,J_1,J_2, 0):\;\mbox{arbitrary } u, J_1,J_2\},
$
to
$ B_1=\{(0,u,R_1,R_2,J_1,J_2, 1):\;\mbox{arbitrary } u, J_1,J_2\}.
$

In view of the jumps of permanent charge $Q(x)$ at $x=a$ and $x=b$, the construction of singular orbits is split into three intervals $[0,a]$, $[a,b]$, $[b,1]$ as follows. To do so, one introduces (unknown) values of $(\phi,c_1,c_2)$ at $x=a$ and $x=b$:
\begin{align}\label{6unknown}
\phi(a)=\phi^a,\; c_1(a)=c_1^a,\; c_2(a)=c_2^a;\quad \phi(b)=\phi^b,\; c_1(b)=c_1^b,\; c_2(a)=c_2^b.
\end{align}

\noindent Then these values determine boundary conditions at $x=a$ and $x=b$  as 
$
B_a=\{(\phi^a, u, c_1^a, c_2^a, J_1, J_2, a):\;\mbox{arbitrary } u, J_1,J_2\},
$
and 
$ B_b=\{(\phi^b, u, c_1^b, c_2^b, J_1, J_2, b): \;\mbox{arbitrary } u, J_1,J_2\}.
$
 Then, there are six unknowns $\phi^{a}$, $\phi^b$, $c_k^a$ and $c_k^b$ for $k=1,2$ should be determined.
On each interval, a singular orbit typically consists two singular layers and one regular layer. 
\begin{description}
\item{(1)}\ On interval $[0,a]$, a singular orbit from $B_0$ to $B_a$ consists of two singular layers located at $x=0$ and $x=a$, denoted as $\Gamma_0^l$ and $\Gamma_a^l$, and one regular layer $\Lambda_l$. Furthermore, with the preassigned values $\phi^a$, $c_1^a$ and $c_2^a$, the flux $J_k^l$ and $u_l(a)$ are uniquely determined so that
$(\phi^a,u_l(a), c_1^a, c_2^a, J_1^l, J_2^l, a)\in B_a.$
\item{(2)}\ On interval $[a,b]$, a singular orbit from $B_a$ to $B_b$ consists of two singular layers located at $x=a$ and $x=b$, denoted as $\Gamma_a^r$ and $\Gamma_b^l$, and one regular layer $\Lambda_m$. Furthermore, with the preassigned values $(\phi^a, c_1^a, c_2^a)$ and $(\phi^b, c_1^b, c_2^b)$, the flux $J_k^m$, $u_m(a)$ and $u_m(b)$ are uniquely determined so that
$(\phi^a,u_m(a), c_1^a, c_2^a, J_1^m, J_2^m, a)\in B_a\;\mbox{ and }\; (\phi^b,u_m(b), c_1^b, c_2^b, J_1^m, J_2^m, b)\in B_b.
$
\item{(3)}\ On interval $[b,1]$, a singular orbit from $B_b$ to $B_1$ consists of two singular layers are located at $x=b$ and $x=1$, denoted as $\Gamma_b^r$ and $\Gamma_1^l$, and one regular layer $\Lambda_r$. Furthermore, with the preassigned values $\phi^b$, $c_1^b$ and $c_2^b$, the flux $J_k^r$ and $u_r(b)$ are uniquely determined so that
$(\phi^b,u_r(b), c_1^b, c_2^b, J_1^r, J_2^r, b)\in B_b.
$
\end{description}

\subsection{Governing System for the BVP.}\label{sec-Gov}

The matching conditions of the connecting problem in previous section are 
 \begin{align}\label{GSys}
 J_k^l=J_k^m=J_k^r\;\mbox{ for }\; k=1,2,\;   u_l(a)=u_m(a)\;\mbox{ and }\; u_m(b)=u_r(b).
 \end{align}
  There are total six conditions, which are exactly the same number of unknowns preassigned in (\ref{6unknown}). Then the singular connecting problem is reduced to {\em the governing system} (\ref{GSys}) (see \cite{EL07} for an explicit form of the governing system). More precisely, 

\begin{equation}\label{Matching}
\begin{aligned}
& z_1c_1^{a}e^{z_1(\phi^{a}-\phi^{a,m})}+ z_2c_2^{a}e^{z_2(\phi^{a}-\phi^{a,m})}+Q_0=0,\\
&z_1c_1^{b}e^{z_1(\phi^{b}-\phi^{b,m})}+ z_2c_2^{b}e^{z_2(\phi^{b}-\phi^{b,m})} +Q_0 =0,\\
&\dfrac{z_2-z_1}{z_2}c_1^{a,l}=c_1^{a}e^{z_1( \phi^{a}-\phi^{a,m})} + c_2^{a}e^{z_2( \phi^{a}-\phi^{a,m})} +Q_0(\phi^{a}-\phi^{a,m}),\\
&\dfrac{z_2-z_1}{z_2}c_1^{b,r}=c_1^{b}e^{z_1( \phi^{b}-\phi^{b,m})} + c_2^{b}e^{z_2( \phi^{b}-\phi^{b,m})} +Q_0(\phi^{b}-\phi^{b,m}),\\
&J_1 = \dfrac{c_1^L - c_1^{a,l}}{H(a)}\Big(1+\dfrac{z_1(\phi^L - \phi^{a,l})}{\ln c_1^L - \ln c_1^{a,l}} \Big)  = \dfrac{c_1^{b,r} - c_1^{R}}{H(1)- H(b)}\Big(1+\dfrac{z_1(\phi^{b,r} - \phi^{R})}{\ln c_1^{b,r} - \ln c_1^{R}} \Big),\\
&J_2 = \dfrac{c_2^L - c_2^{a,l}}{H(a)}\Big(1+\dfrac{z_2(\phi^L - \phi^{a,l})}{\ln c_2^L - \ln c_2^{a,l}} \Big)  = \dfrac{c_2^{b,r} - c_2^{R}}{H(1)- H(b)}\Big(1+\dfrac{z_2(\phi^{b,r} - \phi^{R})}{\ln c_2^{b,r} - \ln c_2^{R}} \Big),\\
&\phi^{b,m}= \phi^{a,m} - (z_1J_1 + z_2 J_2)y,\\
&c_1^{b,m} = e^{z_1z_2(J_1+J_2)y}c_1^{a,m} - \dfrac{Q_0J_1}{z_1(J_1+J_2)}\Big( 1- e^{z_1z_2(J_1+J_2)y} \Big),\\
& J_1 + J_2= - \dfrac{(z_1-z_2)(c_1^{a,m}- c_1^{b,m})+z_2Q_0(\phi^{a,m}- \phi^{b,m})}{z_2(H(b)-H(a))},
\end{aligned}
\end{equation}
where $y>0$ is also unknown, and under electroneutrality boundary conditions $z_1L_1 = -z_2L_2= L$ and $z_1R_1 = -z_2R_2= R$,
\begin{equation}\label{Matching2}
\begin{aligned}
\phi^{L} =& V, \quad \phi^{R} = 0, \quad z_1c_1^L =  -z_2c_2^L = L,  \quad z_1c_1^R =  -z_2c_2^R = R,\\
\phi^{a,l} =& \phi^a - \frac{1}{z_1-z_2}\ln \frac{-z_2c_2^a}{z_1c_1^a}, \quad \phi^{b,r} = \phi^b - \frac{1}{z_1-z_2}\ln \frac{-z_2c_2^b}{z_1c_1^b},\\
c_1^{a,l} =& \frac{1}{z_1}(z_1c_1^a)^{\frac{-z_2}{z_1-z_2}} (-z_2c_2^a)^{\frac{z_1}{z_1-z_2}}, \quad c_2^{a,l} = -\frac{1}{z_2}(z_1c_1^a)^{\frac{-z_2}{z_1-z_2}} (-z_2c_2^a)^{\frac{z_1}{z_1-z_2}},\\
c_1^{b,r} =& \frac{1}{z_1}(z_1c_1^b)^{\frac{-z_2}{z_1-z_2}} (-z_2c_2^b)^{\frac{z_1}{z_1-z_2}}, \quad c_2^{b,r} = -\frac{1}{z_2}(z_1c_1^b)^{\frac{-z_2}{z_1-z_2}} (-z_2c_2^b)^{\frac{z_1}{z_1-z_2}},\\
c_1^{a,m} =& e^{z_1( \phi^{a}-\phi^{a,m})}c_1^a, \quad c_1^{b,m} = e^{z_1( \phi^{b}-\phi^{b,m})}c_1^b,\\
H(x) =& \int_0^x \dfrac{1}{h(s)}ds.
\end{aligned}
\end{equation}

\begin{rem}\em In \eqref{Matching}, the unknowns are: $\phi^{[1]},~\phi^{[2]},~ c_1^{[1]},~c_2^{[1]},~ c_1^{[2]},~c_2^{[2]},~J_1,~\phi^{[1,+]},~\phi^{[2,-]},~y^*$ and $Q$  that is, there are eleven unknowns that matches the total number of equations on \eqref{Matching}. 
\end{rem}

\begin{rem}\em 
In the following sections we will face long terms in some formulas. For simplicity, we introduce, for $k=0,1,2$,
\begin{equation}\label{Abbrev}
\begin{aligned}
I_{k} =& z_1J_{1k}+z_2J_{2k}, \quad T_{k} = J_{1k}+ J_{2k}.
\end{aligned}
\end{equation}
\end{rem}


\section{Expanding Singular Solutions in the Presence of Small Permanent Charge.}\label{sec-expansion}
\setcounter{equation}{0}
This section, and in particular subsection \ref{sec-second}, involves numerous detailed computations, executed with a stringent and meticulous approach and verified multiple times. However, most of the computations are omitted from the text due to their extensive nature, which might hinder the readability of the paper. Interested readers are encouraged to follow each step closely and thoroughly analyze the process to reproduce the results. Furthermore, the authors are available for further clarification upon request and can provide a detailed version of the paper to the journal if needed.

Assuming that $d=0$ and $|Q_0|$ is small, we expand all unknown quantities in the governing system \eqref{Matching} and \eqref{Matching2} in $Q_0$, i.e., we write
\begin{equation}\label{k0j_expan}
\begin{aligned}
&\phi^a= \phi_{0}^a + \phi_{1}^aQ_0+ \phi_{2}^aQ_0^2 + O(Q_0^3),\hspace*{.1in} \phi^b= \phi_{0}^b + \phi_{1}^bQ_0+ \phi_{2}^bQ_0^2 + O(Q_0^3),\\
&c_k^a= c_{k0}^a + c_{k1}^aQ_0+ c_{k2}^aQ_0^2 + O(Q_0^3), \hspace*{.1in}c_k^b= c_{k0}^b + c_{k1}^bQ_0+ c_{k2}^bQ_0^2 + O(Q_0^3),\\
&y= y_0 + y_1Q_0+ y_2Q_0^2 + O(Q_0^3), \hspace*{.1in} J_k= J_{k0} + J_{k1}Q_0+ J_{k2}Q_0^2 + O(Q_0^3).
\end{aligned} 
\end{equation}

\subsection{Zeroth and first order solutions in $Q_0$  of \eqref{Matching} and \eqref{Matching2}. }\label{sec-zeroth-first}
The problem for $Q_0=0$ has been solved in \cite{Liu05} for $h(x)=1$ and, for a general $h(x)$, it can be solved as in \cite{EL07} over the interval $[0,a]$. One can also obtain the zeroth order solution directly by substituting (3.1) into \eqref{Matching}, expanding the identities in $Q_0$, and comparing the terms of like-powers in $Q_0$. We summarize the results for the zeroth order terms below. The detailed proofs for the zeroth and first order solutions are omitted and can be found in \cite{JLZ15}. Those are needed for the computational calculations in Section \ref{sec-num01} as well as the computations for second order solutions in Section \ref{sec-num2}.
 Denote,
\begin{equation}
\alpha = \dfrac{H(a)}{H(1)} \quad \text{and} \quad \beta = \dfrac{H(b)}{H(1)}.
\end{equation}

\begin{prop}\label{prop-zeroth}
The zeroth order solution in $Q_0$ of \eqref{Matching} and \eqref{Matching2}, under electroneutrality boundary conditions $z_1L_1 = -z_2L_2= L$ and $z_1R_1 = -z_2R_2= R$ where one obtains $c_j^L=L_j, c_j^R=R_j, \phi^L=V, \phi^R=0$, is given by
$$
\begin{aligned}
&z_1c_{10}^{a,l}= z_1c_{10}^{a,m}= z_1c_{10}^a= (1-\alpha) L + \alpha R, \quad z_1c_{10}^a = -z_2c_{20}^a,\\
&z_1c_{10}^{b,m}= z_1c_{10}^{b,r}= z_1 c_{10}^b= (1-\beta) L + \beta R, \quad z_1c_{10}^b = -z_2c_{20}^b,\\
&\phi_0^{a,l} = \phi_0^{a,m} = \phi_0^{a} = \dfrac{\ln \big((1-\alpha) L + \alpha R \big) - \ln R}{\ln L - \ln R }V, \\
&\phi_0^{b,m} = \phi_0^{b,r} = \phi_0^{b} =  \dfrac{\ln \big((1-\beta) L + \beta R \big) - \ln R}{\ln L - \ln R }V, \\
& y_0 = \dfrac{H(1)}{(z_1-z_2)(L-R)}\ln \dfrac{(1-\alpha) L + \alpha R }{(1-\beta) L + \beta R },\\
&J_{10} = \dfrac{L - R}{z_1H(1) (\ln L - \ln R)}(z_1V+\ln L - \ln R),\\
& J_{20} = - \dfrac{L - R}{z_2H(1) (\ln L - \ln R)}(z_2V+\ln L - \ln R).
\end{aligned}
$$
\end{prop}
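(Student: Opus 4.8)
The plan is to substitute the expansions \eqref{k0j_expan} into the governing system \eqref{Matching}--\eqref{Matching2} and collect the coefficient of $Q_0^0$. The organizing observation is that at $Q_0=0$ the permanent charge is absent, so the interfaces at $x=a$ and $x=b$ carry no jump and the whole problem degenerates to the classical charge-free PNP boundary value problem on $[0,1]$ solved in \cite{Liu05,EL07}. Accordingly I expect every interface and layer quantity to coincide at leading order, $\phi_0^{a,l}=\phi_0^{a,m}=\phi_0^a$ and $c_{k0}^{a,l}=c_{k0}^{a,m}=c_{k0}^a$ with $z_1c_{10}^a=-z_2c_{20}^a$, and likewise at $b$. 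Granting this collapse, the remaining task is to write down the explicit charge-free solution in the present notation, evaluate it at $x=a,b$, and then recover $y_0$ from the single middle-interval relation that defines it.

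For the charge-free solution I would run the standard first-integral argument on the two zeroth-order Nernst--Planck relations. Setting $u(x):=z_1c_{10}(x)=-z_2c_{20}(x)$ and combining the two flux relations so as to cancel the potential gradient gives $u'=\dfrac{z_1z_2T_0}{z_1-z_2}\,h(x)^{-1}$, so $u$ is affine in $H(x)=\int_0^xh(s)^{-1}\,ds$; the endpoint data $u(0)=L$, $u(1)=R$ force $u(x)=(1-H(x)/H(1))L+(H(x)/H(1))R$, and evaluating at $x=a,b$ (where $H(a)/H(1)=\alpha$ and $H(b)/H(1)=\beta$) yields the stated $z_1c_{10}^a=(1-\alpha)L+\alpha R$ and $z_1c_{10}^b=(1-\beta)L+\beta R$. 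Cancelling instead the concentration gradient gives $\phi'=\dfrac{-I_0}{z_1-z_2}\,(h u)^{-1}$, and dividing by $u'$ produces the separable relation $d\phi/du=\dfrac{-I_0}{z_1z_2T_0}\,u^{-1}$; integrating and imposing $\phi(0)=V$, $\phi(1)=0$ eliminates the constant $I_0/(z_1z_2T_0)$ and gives $\phi_0(x)=\dfrac{\ln u(x)-\ln R}{\ln L-\ln R}\,V$, reproducing $\phi_0^a$ and $\phi_0^b$. The fluxes $J_{10},J_{20}$ then follow from the zeroth-order form of the flux equations in \eqref{Matching}: because the leading-order solution is continuous, the two half-channel expressions for each $J_{k0}$ combine into a single drift-diffusion integral over $[0,1]$, into which I substitute the boundary data $c_1^L=L/z_1$, $c_1^R=R/z_1$, $c_2^L=-L/z_2$, $c_2^R=-R/z_2$, $\phi^L=V$, $\phi^R=0$; the logarithmic factors collapse to $\ln L-\ln R$ and the claimed formulas emerge.

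For $y_0$ I would take the $Q_0^0$ part of $\phi^{b,m}=\phi^{a,m}-(z_1J_1+z_2J_2)y$, namely $y_0=(\phi_0^a-\phi_0^b)/I_0$. Using $I_0=z_1J_{10}+z_2J_{20}=\dfrac{(z_1-z_2)(L-R)V}{H(1)(\ln L-\ln R)}$ together with $\phi_0^a-\phi_0^b=\dfrac{V}{\ln L-\ln R}\ln\dfrac{u(a)}{u(b)}$ from the previous step, the factors $V$ and $\ln L-\ln R$ cancel and leave the stated $y_0$. As a consistency check I would confirm that the same solution satisfies the remaining middle-interval equations of \eqref{Matching}: the last one reduces to $T_0=J_{10}+J_{20}=\dfrac{(z_1-z_2)(R-L)}{z_1z_2H(1)}$, and the Boltzmann relation $c_{10}^{b,m}=e^{z_1z_2T_0y_0}c_{10}^{a,m}$ reduces to $\ln(u(b)/u(a))=z_1z_2T_0y_0$, both of which hold identically.

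The step I expect to be the genuine obstacle is justifying the collapse of the interface and layer quantities at leading order, since it is not a purely local consequence of the interface equations. Eliminating the exponential factor $e^{z_2(\phi_0^a-\phi_0^{a,m})}$ between the first and third equations of \eqref{Matching} at $Q_0=0$, and comparing with the definitions of $c_1^{a,l}$ and $\phi^{a,l}$ in \eqref{Matching2}, only pins down $\phi_0^a-\phi_0^{a,m}=\dfrac{1}{z_1-z_2}\ln\dfrac{-z_2c_{20}^a}{z_1c_{10}^a}$; this forces $\phi_0^{a,l}=\phi_0^{a,m}$ but leaves the interface concentrations free, so that $\phi_0^a=\phi_0^{a,m}$ holds precisely when interface electroneutrality $z_1c_{10}^a=-z_2c_{20}^a$ does. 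What supplies electroneutrality is the global structure of the charge-free problem, whose reduced outer solution is electroneutral at every interior point and hence at the charge-free interfaces $x=a,b$. Turning this global fact into a clean argument within the algebraic system---rather than merely invoking the smoothness of the charge-free profile---is the crux; once it is in place the first-integral computation above is routine, which is why the detailed derivation is deferred to \cite{JLZ15}.
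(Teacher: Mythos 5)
Your proposal is correct and follows essentially the same route as the paper, which itself only sketches the argument: substitute the expansion \eqref{k0j_expan} into \eqref{Matching}--\eqref{Matching2}, compare the $Q_0^0$ coefficients, and identify the zeroth-order solution with the explicit charge-free PNP solution of \cite{Liu05,EL07} (affine $u=z_1c_{10}$ in $H(x)$, logarithmic $\phi_0$ in $u$), deferring the details to \cite{JLZ15}. The interface collapse you flag as the crux is genuine (the third and fourth equations of \eqref{Matching} do degenerate to identities at $Q_0=0$) and is settled exactly as you suggest by the trivial internal layers of the electroneutral charge-free orbit; within the algebraic system it can also be closed at the next order, since in the first-order coefficient of the third equation of \eqref{Matching} the term $Q_0(\phi^a-\phi^{a,m})$ contributes $\phi_0^a-\phi_0^{a,m}$ while all the remaining first-order terms cancel identically, forcing $\phi_0^a=\phi_0^{a,m}$ and hence $z_1c_{10}^a=-z_2c_{20}^a$.
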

To compute the first-order terms in $Q_0$, we adopt the method introduced in \cite{JLZ15}, where we represent the intermediate variables in relation to the zeroth-order terms. The proof process is straightforward: by expanding the relevant identities in \eqref{Matching2} with respect to $Q_0$, comparing the first-order terms in $Q_0$, and utilizing the results derived from Proposition \eqref{prop-zeroth}, we can establish the desired relations.
\begin{lem}\label{lem-1st}
One has
$$
\begin{aligned}
&z_1 c_{11}^a + z_2 c_{21}^a = -\frac{1}{2}, \qquad  \phi_1^{a,m} = \phi_1^a + \frac{1}{2z_1(z_1-z_2)c_{10}^a},\\
&z_1 c_{11}^b + z_2 c_{21}^b = -\frac{1}{2}, \qquad  \phi_1^{b,m} = \phi_1^b + \frac{1}{2z_1(z_1-z_2)c_{10}^b},\\
& \phi_1^{a,l} = \phi_1^a - \frac{c_{10}^a c_{21}^a - c_{20}^a c_{11}^a}{(z_1-z_2)c_{10}^a c_{20}^a}, \quad c_{11}^{a,l} = \frac{z_2(c_{11}^a + c_{21}^a)}{z_2-z_1}, \quad  c_{21}^{a,l} = \frac{z_1(c_{11}^a + c_{21}^a)}{z_1-z_2},\\
& c_{11}^{a,m} = c_{11}^a - \frac{1}{2(z_1-z_2)}, \quad  c_{11}^{b,m} = c_{11}^b - \frac{1}{2(z_1-z_2)},\\
& \phi_1^{b,r} = \phi_1^b - \frac{c_{10}^b c_{21}^b - c_{20}^b c_{11}^b}{(z_1-z_2)c_{10}^b c_{20}^b}, \quad c_{11}^{b,r} = \frac{z_2(c_{11}^b + c_{21}^b)}{z_2-z_1}, \quad  c_{21}^{b,r} = \frac{z_1(c_{11}^b + c_{21}^b)}{z_1-z_2}.
\end{aligned}
$$
\end{lem}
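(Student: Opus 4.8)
The plan is to insert the power-series ansatz \eqref{k0j_expan} into the relations of \eqref{Matching} and \eqref{Matching2} that are local to $x=a$, expand in $Q_0$, and match like powers, using Proposition \ref{prop-zeroth} to evaluate every zeroth-order coefficient. Two consequences of the zeroth-order solution drive all the simplifications: $\phi_0^{a}=\phi_0^{a,l}=\phi_0^{a,m}$, so that $\phi^a-\phi^{a,l}$ and $\phi^a-\phi^{a,m}$ are both $O(Q_0)$; and the neutrality $z_1c_{10}^a=-z_2c_{20}^a$, which gives $z_1^2c_{10}^a+z_2^2c_{20}^a=z_1(z_1-z_2)c_{10}^a$ and makes the combination $z_1c_{10}^a+z_2c_{20}^a$ vanish wherever it appears. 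The computation at $x=b$ is identical after relabeling, so I would carry out everything at $x=a$ and transcribe.

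The identities expressing the regular-layer limits through the primary unknowns are the routine part. Setting $\chi^a:=\phi^a-\phi^{a,l}$ and Taylor-expanding the explicit power-law and logarithmic formulas for $c_1^{a,l},c_2^{a,l},\phi^{a,l}$ in \eqref{Matching2} about their common zeroth-order values, the first-order coefficients collapse—using only $z_1c_{10}^a=-z_2c_{20}^a$—to the stated $c_{11}^{a,l},c_{21}^{a,l}$ and to $\phi_1^{a,l}=\phi_1^a-\chi_1$ with $\chi_1=(c_{10}^ac_{21}^a-c_{20}^ac_{11}^a)/((z_1-z_2)c_{10}^ac_{20}^a)$. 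Likewise, differentiating $c_1^{a,m}=e^{z_1(\phi^a-\phi^{a,m})}c_1^a$ gives $c_{11}^{a,m}=c_{11}^a+z_1c_{10}^a\psi_1^a$ with $\psi_1^a:=\phi_1^a-\phi_1^{a,m}$, so once $\psi_1^a$ is found the stated forms of $c_{11}^{a,m}$ and $\phi_1^{a,m}$ are immediate.

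The crux is fixing the two scalars $\psi_1^a$ and $q_a:=z_1c_{11}^a+z_2c_{21}^a$. Three local equations are available. The first-order expansion of the neutrality condition (first line of \eqref{Matching}) gives $q_a+z_1(z_1-z_2)c_{10}^a\,\psi_1^a+1=0$, and the first-order expansion of the $\phi^{a,l}$ identity (equivalently, neutrality of the left regular layer) gives $q_a+z_1(z_1-z_2)c_{10}^a\,\chi_1=0$; together these express $\psi_1^a$ and $\chi_1$ in terms of the still-unknown $q_a$. The third relation is the internal-layer first integral (third line of \eqref{Matching}), and this is where the main obstacle lies: at first order it is satisfied identically—both sides reduce to $c_{11}^a+c_{21}^a$, since $z_1c_{10}^a+z_2c_{20}^a=0$ kills the $\psi_1^a$ and $\chi_1$ terms—so it carries no first-order information. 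One must therefore expand it to \emph{second} order. Because its two sides already agree through $O(Q_0)$ while the right-hand side carries the explicit factor $Q_0(\phi^a-\phi^{a,m})=\psi_1^aQ_0^2+O(Q_0^3)$, the $O(Q_0^2)$ balance reduces to $\psi_1^a=\tfrac12(\chi_1^2-(\psi_1^a)^2)\,z_1(z_1-z_2)c_{10}^a+(\chi_1-\psi_1^a)q_a$, where the genuinely second-order potentials drop out precisely because $z_1c_{10}^a+z_2c_{20}^a=0$. Substituting $\chi_1-\psi_1^a=1/(z_1(z_1-z_2)c_{10}^a)$ and $\chi_1^2-(\psi_1^a)^2=-(2q_a+1)/(z_1(z_1-z_2)c_{10}^a)^2$ collapses this to $\psi_1^a=-1/(2z_1(z_1-z_2)c_{10}^a)$, and then the neutrality relation forces $q_a=-\tfrac12$. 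These are exactly $z_1c_{11}^a+z_2c_{21}^a=-\tfrac12$ and the stated $\phi_1^{a,m}$.

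I expect the only real difficulty to be conceptual rather than computational: recognizing that the first-integral identity is informationally empty at first order and that its content for the first-order solution actually sits at second order, then tracking the quadratic cross-terms there while verifying that the true second-order unknowns cancel. It is worth noting that $c_{11}^a,c_{21}^a$ and $\phi_1^a$ individually stay free throughout this local analysis—they are pinned only later by the global flux-matching lines of \eqref{Matching}—which is consistent with the lemma constraining just the combinations $q_a=z_1c_{11}^a+z_2c_{21}^a$ and $c_{11}^a+c_{21}^a$.
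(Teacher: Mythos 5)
Your proposal is correct, and it is the same regular-perturbation route the paper prescribes (and defers to \cite{JLZ15}): substitute the ansatz \eqref{k0j_expan} into the identities local to $x=a$, expand in $Q_0$, match like powers, and simplify using $\phi_0^a=\phi_0^{a,l}=\phi_0^{a,m}$ and $z_1c_{10}^a=-z_2c_{20}^a$ from Proposition \ref{prop-zeroth}; the routine expansions you give for $c_{11}^{a,l},c_{21}^{a,l},\phi_1^{a,l},c_{11}^{a,m}$ all check out. Where you add genuine value is precisely where the paper's one-line sketch ("comparing the first-order terms in $Q_0$") is misleading: the first-integral identity (third line of \eqref{Matching}) is indeed identically satisfied at order $Q_0$ --- both sides reduce to $c_{11}^a+c_{21}^a$ since $z_1c_{10}^a+z_2c_{20}^a=0$ --- so first-order matching alone cannot produce $z_1c_{11}^a+z_2c_{21}^a=-\tfrac12$, and one must expand that identity to $O(Q_0^2)$, where, as you verify, the true second-order unknowns cancel for the same neutrality reason. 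Your quadratic balance is correct: writing $M=z_1(z_1-z_2)c_{10}^a$, the relations $\chi_1=-q_a/M$ (from the $\phi^{a,l}$ formula) and $\psi_1=-(q_a+1)/M$ (from the first line of \eqref{Matching}) reduce it to $\psi_1=-1/(2M)$, hence $q_a=-\tfrac12$, which is exactly the lemma; equivalently, the missing information sits in the squared field $u(a)^2=O(Q_0^2)$ of the two Hamiltonian layer integrals whose elimination produced that identity, which is how the detailed computation in \cite{JLZ15} encodes it. Your closing remark is also accurate: the local analysis constrains only the combinations $z_1c_{11}^a+z_2c_{21}^a$, $c_{11}^a+c_{21}^a$ and the intermediate variables, while $c_{11}^a$, $c_{21}^a$, $\phi_1^a$ individually are pinned down only by the global flux-matching equations, as in Proposition \ref{prop-first}.
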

By applying the same procedure as above to the remaining four identities in \eqref{Matching}, and utilizing the results from Proposition \eqref{prop-zeroth} and Lemma \eqref{lem-1st}, one can directly derive the first-order terms as follows.
\begin{prop}\label{prop-first}
The first-order terms of the solution in $Q_0$ for the system \eqref{Matching} are as follows:
$$
\begin{aligned}
& c_{11}^a =\frac{z_2\alpha(\phi_0^b- \phi_0^a)}{z_1-z_2} - \frac{1}{2(z_1-z_2)}, \qquad c_{21}^a =\frac{z_1\alpha(\phi_0^b- \phi_0^a)}{z_2-z_1} - \frac{1}{2(z_2-z_1)},\\
& c_{11}^b =\frac{z_2(1-\beta)(\phi_0^a- \phi_0^b)}{z_1-z_2} - \frac{1}{2(z_1-z_2)}, \quad c_{21}^b =\frac{z_1(1-\beta)(\phi_0^a- \phi_0^b)}{z_2-z_1} - \frac{1}{2(z_2-z_1)},\\
&\phi_1^a = \frac{(1+z_1\lambda)(1+z_2\lambda)(c_{10}^b - c_{10}^a)(\ln c_1^L - \ln c_{10}^a)}{z_1(z_1-z_2)c_{10}^a c_{10}^b (\ln c_1^R - \ln c_1^L)} + \frac{1}{2z_1(z_1-z_2)c_{10}^a} + \frac{z_2\alpha (\phi_0^b - \phi_0^a)}{(z_1-z_2)c_{10}^a}\lambda,
\end{aligned}
$$
$$
\begin{aligned}
&\phi_1^b = \frac{(1+z_1\lambda)(1+z_2\lambda)(c_{10}^b - c_{10}^a)(\ln c_1^R - \ln c_{10}^b)}{z_1(z_1-z_2)c_{10}^a c_{10}^b (\ln c_1^R - \ln c_1^L)} + \frac{1}{2z_1(z_1-z_2)c_{10}^b} + \frac{z_2(1-\beta) (\phi_0^a - \phi_0^b)}{(z_1-z_2)c_{10}^b}\lambda,\\
& y_1 = \frac{\Big((1-\beta)c_1^L + \alpha c_1^R \Big)(\phi_0^a -\phi_0^b)}{z_1(z_1-z_2)T_0 c_{10}^a c_{10}^b} +  \frac{(\ln c_{10}^a - \ln c_{10}^b)(\phi_0^a - \phi_0^b)}{z_1(z_1-z_2)T_0 (c_1^L- c_1^R)} - \frac{(z_2J_{10}+z_1J_{20})(c_{10}^a - c_{10}^b)}{z_1^2z_2(z_1-z_2)T_0^2 c_{10}^a c_{10}^b},
\end{aligned}
$$
$$
\begin{aligned}
&\hspace*{-2.45in}J_{11}= \dfrac{A\Big(z_2(1-B)V + \ln L - \ln R \Big)}{(z_1-z_2) H(1) (\ln L - \ln R)^2}(z_1V+\ln L - \ln R), \\
&\hspace*{-2.45in}J_{21}= \dfrac{A\Big(z_1(1-B)V + \ln L - \ln R \Big)}{(z_2-z_1) H(1) (\ln L - \ln R)^2}(z_2V+\ln L - \ln R),
\end{aligned}
$$
where, under electroneutrality boundary conditions $z_1L_1 = -z_2L_2= L$ and $z_1R_1 = -z_2R_2= R$, and in terms of $\alpha =\frac{H(a)}{H(1)}$ and $\beta =\frac{H(b)}{H(1)}$, the expressions for $A$, $B$ and $\lambda$ are 
\begin{equation}\label{AB-LR}
\begin{aligned}
&A=A(L,R) = - \dfrac{(\beta - \alpha)(L-R)^2}{\Big( (1-\alpha)L + \alpha R \Big)\Big( (1-\beta)L + \beta R \Big)(\ln L - \ln R)},\\
&B=B(L,R) =  \frac{1}{A}\ln \dfrac{ (1-\beta)L + \beta R }{(1-\alpha)L + \alpha R},\qquad
 \lambda = \lambda(L,R) = \frac{V}{\ln L - \ln R}.
\end{aligned}
\end{equation}
\end{prop}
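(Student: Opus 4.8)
The plan is to insert the $Q_0$-expansions \eqref{k0j_expan} into each relation of the governing system \eqref{Matching}, expand every term in powers of $Q_0$, and equate the coefficients of $Q_0^1$. At this order each equation becomes \emph{linear} in the first-order unknowns once the zeroth-order data of Proposition \ref{prop-zeroth} is substituted into the coefficients; the identities used repeatedly are the zeroth-order electroneutrality $z_1c_{10}^a+z_2c_{20}^a=0$, the collapse $\phi_0^a=\phi_0^{a,l}=\phi_0^{a,m}$ and $c_{10}^a=c_{10}^{a,l}=c_{10}^{a,m}$ (and their $b$-analogues), and the closed forms of $J_{k0}$ and $y_0$. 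The intermediate first-order quantities $\phi_1^{a,l},\phi_1^{a,m},c_{11}^{a,l},c_{11}^{a,m}$ and their $b$-counterparts are not carried as unknowns: each is replaced by the expression already recorded in Lemma \ref{lem-1st}. After this elimination the relations that remain to be exploited are the two flux double-equalities for $J_1$ and $J_2$ together with the three middle-region relations (the $\phi^{b,m}$-, $c_1^{b,m}$- and $T=J_1+J_2$-equations), and these must close the system on the primary unknowns $c_{11}^a,c_{21}^a,c_{11}^b,c_{21}^b,\phi_1^a,\phi_1^b,y_1,J_{11},J_{21}$.

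I would first pin down the concentration corrections. The first two relations of \eqref{Matching}, read as the interface electroneutrality $z_1c_1^{a,m}+z_2c_2^{a,m}+Q_0=0$, supply at first order $z_1c_{11}^a+z_2c_{21}^a=-\tfrac12$ and its $b$-analogue (already in Lemma \ref{lem-1st}). The complementary combinations are the genuinely new content: linearizing the left- and right-region flux identities together with the middle-region relations yields $c_{11}^a+c_{21}^a=\alpha(\phi_0^a-\phi_0^b)$ and $c_{11}^b+c_{21}^b=(1-\beta)(\phi_0^b-\phi_0^a)$, where the geometry fractions $\alpha=H(a)/H(1)$ and $1-\beta$ enter precisely because the left region carries ``length'' $H(a)$ and the right region $H(1)-H(b)$. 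Solving the two $2\times2$ systems then gives $c_{11}^a,c_{21}^a,c_{11}^b,c_{21}^b$ in the stated form. The potentials $\phi_1^a,\phi_1^b$ follow from the first-order form of the $\phi^{a,l}$- and $\phi^{b,r}$-relations of \eqref{Matching2} fed into the flux formulas, the product $(1+z_1\lambda)(1+z_2\lambda)$ arising from combining the two species' flux expressions; and $y_1$ follows from the middle-region equations after linearizing $e^{z_1z_2(J_1+J_2)y}$ about its zeroth value $e^{z_1z_2T_0y_0}=c_{10}^{b,m}/c_{10}^{a,m}$ (an identity that itself follows from Proposition \ref{prop-zeroth}), which is what introduces $T_0$ into the denominator of $y_1$.

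The fluxes $J_{11}$ and $J_{21}$ I would extract last, from the two flux double-equalities, and this is the step I expect to be the main obstacle. The difficulty is threefold. First, each flux formula contains the logarithmic-mean factor $(c_k^L-c_k^{a,l})/(\ln c_k^L-\ln c_k^{a,l})$, so its $Q_0$-derivative must be formed carefully; a naive termwise expansion of numerator and denominator separately destroys the cancellation that keeps the ratio finite. Second, the left-region and right-region expressions for a single flux share the unknown interface data at $a$ and $b$, so the equations are genuinely coupled rather than solvable region by region. Third, and most laborious, is the algebraic reduction: after substituting the already-found first-order concentrations and potentials, one is left with bulky rational expressions in $L,R,V,\alpha,\beta$ that must be collapsed into the compact combinations $A(L,R)$ and $B(L,R)$ of \eqref{AB-LR}. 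I would control this computation with two independent cross-checks that must hold automatically: the Lemma \ref{lem-1st} sum constraints $z_1c_{11}^a+z_2c_{21}^a=-\tfrac12$ (and at $b$), and the recovery of Proposition \ref{prop-zeroth} as $Q_0\to0$; any discrepancy in these flags an error in the reduction.
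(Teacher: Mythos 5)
Your proposal takes essentially the same route as the paper: substitute the expansions \eqref{k0j_expan} into \eqref{Matching}--\eqref{Matching2}, match the coefficients of $Q_0$, eliminate the intermediate first-order quantities via Lemma \ref{lem-1st} together with the zeroth-order data of Proposition \ref{prop-zeroth}, and then solve the remaining flux and middle-region relations for $c_{11}^a,c_{21}^a,c_{11}^b,c_{21}^b,\phi_1^a,\phi_1^b,y_1,J_{11},J_{21}$ — which is precisely the procedure the paper invokes (with the detailed algebra deferred to \cite{JLZ15}). Your auxiliary sum relations $c_{11}^a+c_{21}^a=\alpha(\phi_0^a-\phi_0^b)$ and $c_{11}^b+c_{21}^b=(1-\beta)(\phi_0^b-\phi_0^a)$, and the identity $e^{z_1z_2T_0y_0}=c_{10}^{b,m}/c_{10}^{a,m}$ used to linearize the middle-region exponential, are all consistent with the stated formulas, so the plan is sound.
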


\subsection{Second order solutions in $Q_0$ of \eqref{Matching} and \eqref{Matching2}.}\label{sec-second}
The results presented in this section extend the findings of the previous section, employing a consistent approach and methodologies with solutions exhibiting regularity concerning the permanent charge. Nevertheless, it is crucial to note that certain intricate calculations, owing to their extensive nature, have been omitted for the sake of clarity in the presentation. 
For the second order terms in $Q_0$, we will first express the intermediate variables such as $\phi_{2}^{a,l},~c_{k2}^{a,l}$, etc. in terms of zeroth and first order terms and $\phi_{2}^a,~c_{k2}^a$, etc.
\begin{lem}\label{LemQuad1}
One has
$$
\begin{aligned}
 &z_1c_{12}^a + z_2c_{22}^a =  -\dfrac{z_1+z_2}{{24z_1(z_1-z_2)c_{10}^a}},\hspace*{.15in} \phi_{2}^a - \phi_{2}^{a,m} = \dfrac{z_1^2c_{11}^a + z_2^2c_{21}^a}{2\big(z_1(z_1-z_2)c_{10}^a\big)^2}  - \dfrac{ z_1+z_2}{12\big(z_1(z_1-z_2)c_{10}^a\big)^2},\\
 &z_1c_{12}^b + z_2c_{22}^b =  -\dfrac{z_1+z_2}{{24z_1(z_1-z_2)c_{10}^b}},\hspace*{.15in} \phi_{2}^b - \phi_{2}^{b,m} = \dfrac{z_1^2c_{11}^b + z_2^2c_{21}^b}{2\big(z_1(z_1-z_2)c_{10}^b\big)^2}  - \dfrac{ z_1+z_2}{12\big(z_1(z_1-z_2)c_{10}^b\big)^2}.
\end{aligned}
$$
\end{lem}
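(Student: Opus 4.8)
The plan is to mirror, one order higher, the scheme that produced Lemma \ref{lem-1st}: expand the two governing relations in \eqref{Matching} that enforce electroneutrality inside the middle region, namely $z_1c_1^ae^{z_1(\phi^a-\phi^{a,m})}+z_2c_2^ae^{z_2(\phi^a-\phi^{a,m})}+Q_0=0$ and its $b$-counterpart, together with the Boltzmann-type definitions $c_k^{a,m}=e^{z_k(\phi^a-\phi^{a,m})}c_k^a$ from \eqref{Matching2}, to order $Q_0^2$. Writing $w^a:=\phi^a-\phi^{a,m}=w_1^aQ_0+w_2^aQ_0^2+O(Q_0^3)$, Proposition \ref{prop-zeroth} gives $w_0^a=0$ and Lemma \ref{lem-1st} gives $w_1^a=-\tfrac{1}{2z_1(z_1-z_2)c_{10}^a}$, so only the quadratic coefficient $w_2^a=\phi_2^a-\phi_2^{a,m}$ remains to be found; this is one of the two targets, the other being $z_1c_{12}^a+z_2c_{22}^a$.

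First I would substitute the expansions \eqref{k0j_expan} into the first governing equation and Taylor-expand each exponential, keeping terms through $Q_0^2$; since $w^a=O(Q_0)$, only the quadratic Taylor term $\tfrac12 z_k^2(w_1^a)^2$ of $e^{z_kw^a}$ contributes at this order beyond the linear one. Collecting the $Q_0^2$-coefficient and using the zeroth-order electroneutrality $z_1c_{10}^a=-z_2c_{20}^a$ together with $z_1c_{11}^a+z_2c_{21}^a=-\tfrac12$ from Lemma \ref{lem-1st}, the charge-weighted moments collapse to $z_1^2c_{10}^a+z_2^2c_{20}^a=z_1(z_1-z_2)c_{10}^a$ and $z_1^3c_{10}^a+z_2^3c_{20}^a=(z_1+z_2)z_1(z_1-z_2)c_{10}^a$. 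This leaves a single linear relation tying $z_1c_{12}^a+z_2c_{22}^a$ to $w_2^a$.

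Since that relation alone does not separate the two unknowns, I would obtain the missing equation exactly as at first order: expand the middle-region continuity of $c_1^{a,m}$ (the second-order analogue of the relation $c_{11}^{a,m}=c_{11}^a-\tfrac{1}{2(z_1-z_2)}$ in Lemma \ref{lem-1st}) and compare it with the order-$Q_0^2$ expansion of the Boltzmann definition $c_1^{a,m}=e^{z_1w^a}c_1^a$, which reads $c_{12}^{a,m}=c_{12}^a+z_1w_1^ac_{11}^a+\big(z_1w_2^a+\tfrac12 z_1^2(w_1^a)^2\big)c_{10}^a$. Solving the resulting $2\times2$ system yields both $z_1c_{12}^a+z_2c_{22}^a$ and $\phi_2^a-\phi_2^{a,m}$; the specific constants $\tfrac{1}{24}$ and $\tfrac{1}{12}$ emerge from the near-cancellation $-\tfrac{1}{24}-\tfrac{1}{12}+\tfrac18=0$ among the competing $O\!\big(1/(z_1(z_1-z_2)c_{10}^a)\big)$ contributions of the quadratic exponential terms and of $w_1^a$. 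The $b$-statements follow verbatim under $a\mapsto b$.

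The main obstacle is the second step: pinning down $w_2^a=\phi_2^a-\phi_2^{a,m}$ independently of $z_1c_{12}^a+z_2c_{22}^a$. The layer relation coming from the first governing equation couples the two, and the companion governing relation in \eqref{Matching} involving $c_1^{a,l}$ turns out to be automatically satisfied at this order, reducing to an identity in the first-order data once Lemma \ref{lem-1st} is used. One therefore has to import the correct second-order middle-region value $c_{12}^{a,m}$, and then carry the quadratic Taylor bookkeeping carefully enough that the many $O\!\big(1/(z_1(z_1-z_2)c_{10}^a)\big)$ and $O\!\big(1/(z_1(z_1-z_2)c_{10}^a)^2\big)$ terms combine into the compact rational coefficients stated.
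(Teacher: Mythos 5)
Your first step coincides with the paper's: collecting the $Q_0^2$-coefficient after substituting \eqref{k0j_expan} into the first equation of \eqref{Matching} gives one linear relation between $\phi_2^a-\phi_2^{a,m}$ and $z_1c_{12}^a+z_2c_{22}^a$ (the displayed identity in the paper's proof), and your observation that the third equation of \eqref{Matching} is \emph{automatically satisfied} at order $Q_0^2$ is correct — with Lemma \ref{lem-1st}, both sides reduce to $c_{12}^a+c_{22}^a-\frac{1}{8z_1(z_1-z_2)c_{10}^a}$. The gap is in how you then close the system. Your proposed second equation — importing ``the correct second-order middle-region value $c_{12}^{a,m}$'' and comparing it with the $Q_0^2$-expansion of $c_1^{a,m}=e^{z_1(\phi^a-\phi^{a,m})}c_1^a$ — is circular: the sixth line of \eqref{Matching2} \emph{is} the definition of $c_1^{a,m}$, so there is no independent local source for $c_{12}^{a,m}$. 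Indeed the paper's value $c_{12}^{a,m}=c_{12}^a+\frac{z_1-8z_2}{24z_1(z_1-z_2)^2c_{10}^a}$ (Lemma \ref{LemQuad2}) is itself derived from that same Boltzmann relation \emph{using the output of the present lemma}, exactly as the first-order identity $c_{11}^{a,m}=c_{11}^a-\frac{1}{2(z_1-z_2)}$ you invoke is nothing but $c_{11}^{a,m}=c_{11}^a+z_1w_1^ac_{10}^a$ with the known $w_1^a$ — a consequence, not a constraint. The genuinely independent middle-region relations (the seventh through ninth lines of \eqref{Matching}) couple $c_1^{a,m}$ to $c_1^{b,m}$ through $J_{k2}$, $y_2$ and the $b$-unknowns; they are what Proposition \ref{prop-cterms} exploits later for the individual $c_{12}^a$, $c_{22}^a$, and they cannot close your local two-unknown system at this stage.

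The missing idea — and what the paper actually does when it says to expand \eqref{c1a-rep} ``up to the third order'' — is to go one order higher in the very equation you discarded: substitute $c_1^{a,l}$ from \eqref{Matching2} into the third equation of \eqref{Matching} and compare the $Q_0^3$-coefficients of the two sides. At that order the equation is no longer degenerate, and, crucially, no third-order unknowns survive: $c_{13}^a+c_{23}^a$ cancels between the two sides, $\phi_3^a-\phi_3^{a,m}$ drops because $z_1c_{10}^a+z_2c_{20}^a=0$, and even $w_2^a=\phi_2^a-\phi_2^{a,m}$ drops because its net coefficient is $(z_1c_{11}^a+z_2c_{21}^a)+w_1^a\,z_1(z_1-z_2)c_{10}^a+1=-\tfrac12-\tfrac12+1=0$. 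Moreover, the second-order concentrations enter both sides only through the single combination $z_1c_{12}^a+z_2c_{22}^a$ (with coefficient $\frac{1}{2z_1(z_1-z_2)c_{10}^a}$ on the left, via the cross terms of the binomial expansion of $(z_1c_1^a)^{\frac{-z_2}{z_1-z_2}}(-z_2c_2^a)^{\frac{z_1}{z_1-z_2}}$, and coefficient $w_1^a$ on the right), so the $Q_0^3$-coefficient determines $z_1c_{12}^a+z_2c_{22}^a=-\frac{z_1+z_2}{24z_1(z_1-z_2)c_{10}^a}$ from first-order data alone; the formula for $\phi_2^a-\phi_2^{a,m}$ then follows from your (and the paper's) first relation, and the $b$-statements are verbatim with $a\mapsto b$, $\alpha\mapsto\beta$. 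Without this order-$Q_0^3$ step your argument has one equation for two unknowns and cannot produce the constants $\tfrac1{24}$ and $\tfrac1{12}$.
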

\begin{proof}
We present the derivations of the first two equations without showing the tedious computations, which mainly involve manipulating lengthy terms. The first step is to substitute  \eqref{k0j_expan} into the first equation in \eqref{Matching} and expand with respect to the parameter $Q$.
Then, by applying a Taylor expansion for the function $e^{z_k(\phi^a - \phi^{a,m})}$ with respect to $Q$,  we obtain the following expression for the second-order terms:
\begin{equation}\label{phi2^a}
\begin{aligned}
\phi_{2}^a- \phi_{2}^{a,m} 
& = - \dfrac{z_1c_{12}^a + z_2c_{22}^a}{z_1(z_1-z_2)c_{10}^a} + \dfrac{z_1^2c_{11}^a + z_2^2c_{21}^a}{2\big(z_1(z_1-z_2)c_{10}^a\big)^2} - \dfrac{z_1^3c_{10}^a + z_2^3c_{20}^a}{8\big(z_1(z_1-z_2)c_{10}^a\big)^3}.
\end{aligned}
\end{equation}
Next, we substitute the expression for $c_1^{a,l}$ from $c_1^{a,l}$ from \eqref{Matching2} into the third equation of \eqref{Matching} and expand the resulting equation up to third-order terms in $Q$, which gives us:
\begin{equation}\label{c1a-rep}
\dfrac{z_2 -z_1}{z_2}\Big(\frac{1}{z_1}(z_1c_1^a)^{\frac{-z_2}{z_1-z_2}}(-z_2c_2^a)^{\frac{z_1}{z_1-z_2}} \Big) = c_1^a e^{z_1(\phi^a - \phi^{a,m})} + c_2^a e^{z_2(\phi^a - \phi^{a,m})} + Q (\phi^a - \phi^{a,m}).
\end{equation}
To obtain the desired result, we must carefully compute the expansions on both sides of  \eqref{c1a-rep} up to the third order and simplify the terms accordingly.
to obtain the desired result.
\end{proof}

Now, we  expand the relevant identities in \eqref{Matching2} in $Q$ of the paper, compare the first and second order terms in $Q$ and use the results for the zeroth and first order terms in Prop 3.1 and Lemma 3.3.

\begin{lem}\label{LemQuad2} One has
$$
\begin{aligned}
&\phi_{2}^{a,l} = \phi_{2}^a + \dfrac{z_1z_2\alpha(\phi_{0}^b - \phi_{0}^a)}{2\big(z_1(z_1-z_2)c_{10}^a\big)^2}-\dfrac{z_1+z_2}{6\big(z_1(z_1-z_2)c_{10}^a\big)^2} , \quad c_{12}^{a,l}= \dfrac{z_2(c_{12}^a + c_{22}^a)}{z_2-z_1} + \dfrac{z_2}{8z_1c_{10}^a(z_1-z_2)^2} , \\
&\phi_{2}^{b,r} =  \phi_{2}^b + \dfrac{z_1z_2(1-\beta)(\phi_{0}^a - \phi_{0}^b)}{2\big(z_1(z_1-z_2)c_{10}^b\big)^2}-\dfrac{z_1+z_2}{6\big(z_1(z_1-z_2)c_{10}^b\big)^2} , \quad c_{12}^{b,r}= \dfrac{z_2(c_{12}^b + c_{22}^b)}{z_2-z_1} + \dfrac{z_2}{8z_1c_{10}^b(z_1-z_2)^2} ,\\
&c_{22}^{a,l}= \dfrac{z_1(c_{12}^a + c_{22}^a)}{z_1-z_2} - \dfrac{z_1}{8z_1c_{10}^a(z_1-z_2)^2}  , \qquad c_{22}^{b,r}= \dfrac{z_1(c_{12}^b + c_{22}^b)}{z_1-z_2} - \dfrac{z_1}{8z_1c_{10}^b(z_1-z_2)^2}  \\
& c_{12}^{a,m} = c_{12}^a+\dfrac{z_1- 8z_2}{24z_1(z_1-z_2)^2c_{10}^a}, \qquad \qquad c_{12}^{b,m} =  c_{12}^b+\dfrac{z_1- 8z_2}{24z_1(z_1-z_2)^2c_{10}^b}.
\end{aligned}
$$
\end{lem}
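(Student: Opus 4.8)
The plan is to treat every intermediate quantity appearing in Lemma~\ref{LemQuad2} as an explicit elementary function of the primary unknowns $\phi^a,\phi^{a,m},c_1^a,c_2^a$ (and their $b$-counterparts) via the defining relations in \eqref{Matching2}, substitute the $Q_0$-expansions \eqref{k0j_expan}, Taylor-expand to second order, and read off the coefficient of $Q_0^2$. The electroneutrality identity $z_1c_{10}^a=-z_2c_{20}^a$ from Proposition~\ref{prop-zeroth} forces each base ratio to equal $1$ at zeroth order, so every logarithm, fractional power, and exponential is expanded about a regular point and all series are well defined. Throughout I set $q_j=c_{1j}^a/c_{10}^a$ and $p_j=c_{2j}^a/c_{20}^a$ for $j=1,2$, and use $c_{20}^a=-\tfrac{z_1}{z_2}c_{10}^a$.

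For $\phi_2^{a,l}$, I write $\phi^{a,l}=\phi^a-\tfrac{1}{z_1-z_2}\ln\tfrac{-z_2c_2^a}{z_1c_1^a}$ and set $r=\tfrac{-z_2c_2^a}{z_1c_1^a}=\tfrac{1+p_1Q_0+p_2Q_0^2}{1+q_1Q_0+q_2Q_0^2}$; expanding $\ln r$ to order $Q_0^2$ gives the second-order piece $-\tfrac{1}{z_1-z_2}\big((p_2-q_2)-\tfrac12(p_1^2-q_1^2)\big)$. The combination $p_2-q_2$ collapses to $-\tfrac{1}{z_1c_{10}^a}\big(z_1c_{12}^a+z_2c_{22}^a\big)$, after which Lemma~\ref{LemQuad1} replaces $z_1c_{12}^a+z_2c_{22}^a$ by an explicit constant; the residual $p_1^2-q_1^2$ is evaluated from the first-order values in Proposition~\ref{prop-first}, and assembling the two contributions produces precisely the $z_1z_2\alpha(\phi_0^b-\phi_0^a)$ and $(z_1+z_2)$ terms in the claim. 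The formulas for $c_{12}^{a,l}$ and $c_{22}^{a,l}$ follow the same route applied to $c_1^{a,l}=\tfrac1{z_1}(z_1c_1^a)^{s}(-z_2c_2^a)^{t}$ and $c_2^{a,l}=-\tfrac1{z_2}(z_1c_1^a)^{s}(-z_2c_2^a)^{t}$, with $s=\tfrac{-z_2}{z_1-z_2}$, $t=\tfrac{z_1}{z_1-z_2}$ and $s+t=1$; the prefactors $(z_1c_{10}^a)^{s+t}$ telescope to $z_1c_{10}^a$, so the $Q_0^2$ coefficient is $c_{10}^a\big(sq_2+tp_2+\tfrac{s(s-1)}2q_1^2+\tfrac{t(t-1)}2p_1^2+st\,q_1p_1\big)$.

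The quantity $c_{12}^{a,m}$ is obtained from $c_1^{a,m}=e^{z_1(\phi^a-\phi^{a,m})}c_1^a$. Writing $\phi^a-\phi^{a,m}=\delta_1Q_0+\delta_2Q_0^2+O(Q_0^3)$, the first-order slope $\delta_1=-\tfrac{1}{2z_1(z_1-z_2)c_{10}^a}$ comes from Lemma~\ref{lem-1st} and the second-order slope $\delta_2=\phi_2^a-\phi_2^{a,m}$ from Lemma~\ref{LemQuad1}; expanding the exponential and multiplying by the series for $c_1^a$ gives the $Q_0^2$ coefficient $c_{12}^a+c_{11}^a z_1\delta_1+c_{10}^a\big(z_1\delta_2+\tfrac12 z_1^2\delta_1^2\big)$, which simplifies to $c_{12}^a+\tfrac{z_1-8z_2}{24z_1(z_1-z_2)^2c_{10}^a}$ after substituting $\delta_1,\delta_2$ and the first-order value of $c_{11}^a$. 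Every $b$-indexed identity is identical after the replacements $a\mapsto b$, $\alpha\mapsto 1-\beta$, and $\phi_0^b-\phi_0^a\mapsto\phi_0^a-\phi_0^b$, reflecting the left--right symmetry of the connecting problem.

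The main obstacle I anticipate is not any single expansion but the reliable bookkeeping of the two fractional-power factors in $c_1^{a,l}$ and $c_2^{a,l}$: the cross term $st\,q_1p_1$ together with the curvature terms $\tfrac{s(s-1)}2q_1^2$ and $\tfrac{t(t-1)}2p_1^2$ must be combined and then reduced using both the first-order data of Proposition~\ref{prop-first} and the linear constraint $z_1c_{12}^a+z_2c_{22}^a=-\tfrac{z_1+z_2}{24z_1(z_1-z_2)c_{10}^a}$ of Lemma~\ref{LemQuad1}; only after these cancellations does the isolated constant $\tfrac{z_2}{8z_1c_{10}^a(z_1-z_2)^2}$ (and its $c_{22}^{a,l}$ counterpart) survive. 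Carrying $s+t=1$ explicitly through every step is the device that keeps the fractional exponents from proliferating and makes the final reduction manageable.
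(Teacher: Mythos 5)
Your proposal is correct and takes essentially the same route as the paper: both proofs expand the explicit relations for $\phi^{a,l}$, $c_k^{a,l}$, and $c_1^{a,m}$ (and their $b$-counterparts) in \eqref{Matching2} to second order in $Q_0$ and substitute the lower-order data from Lemma \ref{lem-1st}, Proposition \ref{prop-first}, and Lemma \ref{LemQuad1}, and your displayed coefficients --- the $\ln r$ expansion giving $-\tfrac{1}{z_1-z_2}\big((p_2-q_2)-\tfrac12(p_1^2-q_1^2)\big)$, the fractional-power coefficient simplified via $s+t=1$, $s-1=-t$, $t-1=-s$, and $c_{12}^{a,m}=c_{12}^a+c_{11}^a z_1\delta_1+c_{10}^a\big(z_1\delta_2+\tfrac12 z_1^2\delta_1^2\big)$ --- all check out against the stated formulas, including the constant $\tfrac{z_1-8z_2}{24z_1(z_1-z_2)^2c_{10}^a}$. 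The only slip is a misattribution in your closing paragraph: the second-order constraint $z_1c_{12}^a+z_2c_{22}^a=-\tfrac{z_1+z_2}{24z_1(z_1-z_2)c_{10}^a}$ of Lemma \ref{LemQuad1} is needed only for $\phi_2^{a,l}$ (to evaluate $p_2-q_2$), whereas in the $c_{12}^{a,l}$ computation $sq_2+tp_2$ collapses directly to $-z_2(c_{12}^a+c_{22}^a)/\big((z_1-z_2)c_{10}^a\big)$, which is kept symbolic in the lemma, and the isolated constant $\tfrac{z_2}{8z_1c_{10}^a(z_1-z_2)^2}$ comes from $-\tfrac{st}{2}(q_1-p_1)^2$ with $p_1-q_1=\tfrac{1}{2z_1c_{10}^a}$, itself a consequence of $z_1c_{11}^a+z_2c_{21}^a=-\tfrac12$ in Lemma \ref{lem-1st} alone.
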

\begin{proof}
Starting from the second line of \eqref{Matching2}, we can derive the second-order terms as follows:
$$
\begin{aligned}
\phi_{2}^{a,l} = & \phi_{2}^a + \frac{12z_1(z_1-z_2)c_{11}^a + 2(z_1-2z_2)}{24\left(z_1(z_1-z_2) c_{10}^a\right)^2}.
\end{aligned}
$$
By substituting $c_{11}^a$ from Proposition \eqref{prop-first}, we obtain the formula for $\phi_{2}^{a,l}$.

Moving on to the fourth line of \eqref{Matching2}, the second-order terms can be expressed as:
$$
\begin{aligned}
c_{12}^{a,l}  = & \frac{z_2(c_{12}^a + c_{22}^a)}{z_2-z_1} + \frac{z_2}{8z_1c_{10}^a(z_1-z_2)^2}.
\end{aligned}
$$
Finally, from the sixth line of \eqref{Matching2}, we can determine $c_{12}^{a,m}$. Similar relations can be found for the other terms.
\end{proof}

\begin{rem}\em

It is important to recognize that for small values of $Q_0$, we can make an approximation: $z_1c_1^a + z_2c_2^a \approx z_1c_{10}^a + z_2c_{20}^a = 0$, which implies that $\dfrac{-z_2c_2^a}{z_1c_1^a} \approx 1$. In the proof provided earlier, we applied the Maclaurin expansion of the natural logarithm, given by $\ln (x) = \ln (1 + (x-1) ) = (x-1) - \frac{1}{2}(x-1)^2 +\cdots$. This expansion converges when $|x-1| < 1.$ 

\end{rem}


By following the previously outlined procedure for the last four identities in \eqref{Matching} and leveraging the results from Proposition \eqref{prop-first}, along with Lemmas \eqref{LemQuad1} and \eqref{LemQuad2}, one can straightforwardly derive the following Lemma.

\begin{lem}\label{LemJ2} Second order fluxes of the solution in $Q_0$ to the system \ref{Matching} are given by
\begin{equation*}
\begin{aligned}
J_{12}= &  \dfrac{z_2(c_{12}^a + c_{22}^a)}{(z_1-z_2)\alpha H(1)}\Big(1+ \dfrac{z_1(\phi^L-\phi_{0}^a)}{\ln c_1^L - \ln c_{10}^a} - \dfrac{z_1(\phi^L-\phi_{0}^a)(c_1^L-c_{10}^a)}{(\ln c_1^L - \ln c_{10}^a)^2c_{10}^a}  \Big)\\
& - \dfrac{z_1(c_1^L-c_{10}^a) }{\alpha H(1)(\ln c_1^L - \ln c_{10}^a)}\Bigg( \phi_{2}^a + \dfrac{z_1z_2\alpha(\phi_{0}^b - \phi_{0}^a)}{2\big(z_1(z_1-z_2)c_{10}^a\big)^2}-\dfrac{z_1+z_2}{6\big(z_1(z_1-z_2)c_{10}^a\big)^2}\\
&  - \dfrac{z_2(\phi^L - \phi_{0}^a)}{8z_1(z_1-z_2)^2(\ln c_1^L - \ln c_{10}^a)(c_{10}^a)^2} \Bigg)\\
 & - \dfrac{  z_1 z_2(c_{11}^a+c_{21}^a)}{\alpha H(1)(\ln c_1^L - \ln c_{10}^a)(z_1-z_2)}\Bigg(\phi_{1}^a - \dfrac{(c_1^L -c_{10}^a)\phi_{1}^a}{(\ln c_1^L - \ln c_{10}^a)c_{10}^a} -\dfrac{1}{2z_1(z_1-z_2)c_{10}^a}\\
&  + \dfrac{c_1^L-c_{10}^a}{2z_1(z_1-z_2)(\ln c_1^L - \ln c_{10}^a)(c_{10}^a)^2}  + \dfrac{z_2(c_{11}^a+c_{21}^a)(\phi^L-\phi_{0}^a)(c_1^L+c_{10}^a)}{2(z_1-z_2)(\ln c_1^L - \ln c_{10}^a)(c_{10}^a)^2}\Bigg)\\
& - \dfrac{z_1z_2 (\phi^L-\phi_{0}^a)}{8z_1(z_1-z_2)^2c_{10}^a\alpha H(1)(\ln c_1^L - \ln c_{10}^a)}- \dfrac{z_2}{8z_1c_{10}^a(z_1-z_2)^2\alpha H(1)},
\end{aligned}
\end{equation*}

\begin{equation*}
\begin{aligned}
J_{22} = &  -\dfrac{z_1(c_{12}^a + c_{22}^a)}{(z_1-z_2)\alpha H(1)}\Big(1+ \dfrac{z_2(\phi^L-\phi_{0}^a)}{\ln c_2^L - \ln c_{20}^a} - \dfrac{z_2(\phi^L-\phi_{0}^a)(c_2^L-c_{20}^a)}{(\ln c_2^L - \ln c_{20}^a)^2c_{20}^a}  \Big)\\
& - \dfrac{z_2(c_2^L-c_{20}^a) }{\alpha H(1)(\ln c_2^L - \ln c_{20}^a)}\Bigg( \phi_{2}^a + \dfrac{z_1z_2\alpha(\phi_{0}^b - \phi_{0}^a)}{2\big(z_1(z_1-z_2)c_{10}^a\big)^2}-\dfrac{z_1+z_2}{6\big(z_1(z_1-z_2)c_{10}^a\big)^2}\\
&  + \dfrac{z_1(\phi^L - \phi_{0}^a)}{8z_1(z_1-z_2)^2(\ln c_2^L - \ln c_{20}^a)c_{10}^ac_{20}^a} \Bigg)\\
 & + \dfrac{  z_1 z_2(c_{11}^a+c_{21}^a)}{\alpha H(1)(\ln c_2^L - \ln c_{20}^a)(z_1-z_2)}\Big(\phi_{1}^a - \dfrac{(c_2^L -c_{20}^a)\phi_{1}^a}{(\ln c_2^L - \ln c_{20}^a)c_{20}^a} -\dfrac{1}{2z_1(z_1-z_2)c_{10}^a}\\
& + \dfrac{c_2^L-c_{20}^a}{2z_1(z_1-z_2)(\ln c_2^L - \ln c_{20}^a)(c_{10}^a)(c_{20}^a)}  - \dfrac{z_1(c_{11}^a+c_{21}^a)(\phi^L-\phi_{0}^a)(c_2^L+ c_{20}^a)}{2(z_1-z_2)(\ln c_2^L - \ln c_{20}^a)(c_{20}^a)^2}\Big)\\
&  + \dfrac{z_1z_2 (\phi^L-\phi_{0}^a)}{8z_1(z_1-z_2)^2c_{10}^a\alpha H(1)(\ln c_2^L - \ln c_{20}^a)}+ \dfrac{z_1}{8z_1c_{10}^a(z_1-z_2)^2\alpha H(1)},
\end{aligned}
\end{equation*}
where,
$$
K_1 =  T_0y_1 + T_1y_0, \ \ \
K_2 = T_2y_0+ T_1y_1 + T_0y_2.
$$
\end{lem}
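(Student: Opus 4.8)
The plan is to obtain $J_{12}$ and $J_{22}$ by expanding the two flux-defining identities (the fifth and sixth lines of \eqref{Matching}) in powers of $Q_0$ and reading off the coefficient of $Q_0^2$. Since the stated formulas are written over the left interval $[0,a]$, I would work with the left expressions, using $H(a)=\alpha H(1)$, namely
\begin{equation*}
J_1 = \frac{c_1^L - c_1^{a,l}}{\alpha H(1)}\Big(1+\frac{z_1(\phi^L-\phi^{a,l})}{\ln c_1^L-\ln c_1^{a,l}}\Big),
\end{equation*}
together with its $J_2$ analogue. The only $Q_0$-dependent quantities on the right-hand side are $c_1^{a,l}$ and $\phi^{a,l}$ (the boundary data $c_1^L,\phi^L$ are fixed), so the whole task reduces to a careful second-order Taylor expansion of a product of a linear factor and a logarithmic quotient.

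First I would substitute the $Q_0$-expansions of the interface quantities, writing $c_1^{a,l}=c_{10}^{a,l}+c_{11}^{a,l}Q_0+c_{12}^{a,l}Q_0^2+O(Q_0^3)$ and likewise for $\phi^{a,l}$, and recalling from Proposition \ref{prop-zeroth} that $c_{10}^{a,l}=c_{10}^a$ and $\phi_0^{a,l}=\phi_0^a$. The linear factor then expands immediately. The delicate piece is the quotient $z_1(\phi^L-\phi^{a,l})/(\ln c_1^L-\ln c_1^{a,l})$: here I would expand the denominator via $\ln c_1^{a,l}=\ln c_{10}^a+\frac{c_{11}^{a,l}}{c_{10}^a}Q_0+\big(\frac{c_{12}^{a,l}}{c_{10}^a}-\frac{(c_{11}^{a,l})^2}{2(c_{10}^a)^2}\big)Q_0^2+O(Q_0^3)$, then apply the standard order-two expansion of $N/D=(N_0+N_1Q_0+N_2Q_0^2)/(D_0+D_1Q_0+D_2Q_0^2)$, whose $Q_0^2$-coefficient is $N_2/D_0-(N_1D_1+N_0D_2)/D_0^2+N_0D_1^2/D_0^3$. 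Collecting the product gives $J_{12}=G_0F_2+G_1F_1+G_2F_0$, where $G$ and $F$ are the two factors; the $J_2$ computation is identical with $z_1\mapsto z_2$ and the $c_2^{a,l}$ data, the overall sign change tracing back to $c_2^{a,l}=-\tfrac{1}{z_2}(\cdots)$ versus $c_1^{a,l}=+\tfrac{1}{z_1}(\cdots)$.

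Next I would eliminate the intermediate variables in favour of the interface unknowns $c_{k2}^a,\phi_2^a$ and the already-known $c_{k1}^a,\phi_1^a$. This is where Lemmas \ref{lem-1st}, \ref{LemQuad1} and \ref{LemQuad2} enter: substituting $c_{11}^{a,l},\phi_1^{a,l}$ (first order) and $c_{12}^{a,l},\phi_2^{a,l}$ (second order) turns the raw expansion into an expression in $c_{12}^a+c_{22}^a$, $\phi_2^a$, $\phi_1^a$, $c_{11}^a+c_{21}^a$ and zeroth-order data, after which the zeroth-order identities of Proposition \ref{prop-zeroth} (in particular $z_1c_{10}^a=-z_2c_{20}^a$) collapse several groups. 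Regrouping by these remaining unknowns reproduces the four bracketed blocks displayed in the statement. The abbreviations $K_1=T_0y_1+T_1y_0$ and $K_2=T_2y_0+T_1y_1+T_0y_2$ are simply the first- and second-order coefficients of the product $(J_1+J_2)y$; they are recorded here because the same expansion procedure, when applied to the $c_1^{b,m}$ line of \eqref{Matching}, forces the exponent $z_1z_2(J_1+J_2)y$ to be expanded to second order, and these combinations are what appear.

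The main obstacle is purely the organizational and algebraic bookkeeping of the second-order quotient expansion: the $Q_0^2$-coefficient of $N/D$ contributes four competing terms, and the cross-terms $c_{11}^{a,l}\phi_1^{a,l}$ and $(c_{11}^{a,l})^2$ must be tracked exactly, since after substitution of Lemma \ref{LemQuad2} they are precisely what generate the terms with denominators $(\ln c_1^L-\ln c_{10}^a)^2$ and the $\tfrac{1}{8}$-prefactors in the statement. No new idea is required beyond the expand-and-match scheme already used for the first-order terms of Proposition \ref{prop-first}; the difficulty is ensuring that no cross-term is dropped and that the final partition matches the stated grouping into the $(c_{12}^a+c_{22}^a)$, $\phi_2^a$, $\phi_1^a$ and $(c_{11}^a+c_{21}^a)$ blocks.
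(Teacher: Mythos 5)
Your proposal is correct and follows essentially the same route as the paper's proof: both expand the left-interval flux identity $J_1=\frac{c_1^L-c_1^{a,l}}{H(a)}\big(1+\frac{z_1(\phi^L-\phi^{a,l})}{\ln c_1^L-\ln c_1^{a,l}}\big)$ to second order in $Q_0$ using the identical expansion $\ln c_1^{a,l}=\ln c_{10}^a+\frac{c_{11}^{a,l}}{c_{10}^a}Q_0+\frac{2c_{12}^{a,l}c_{10}^a-(c_{11}^{a,l})^2}{2(c_{10}^a)^2}Q_0^2+O(Q_0^3)$, then substitute $c_{11}^{a,l}$, $c_{12}^{a,l}$, $\phi_1^{a,l}$ from Lemmas \ref{lem-1st} and \ref{LemQuad2} to reach the stated blocks, with the $J_2$ case obtained by the analogous substitution. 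Your identification of $K_1$ and $K_2$ as the first- and second-order coefficients of the product $(J_1+J_2)y$, forced by expanding the exponential in the $c_1^{b,m}$ equation of \eqref{Matching}, is exactly how the paper introduces them.
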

\begin{proof} 
From \eqref{Matching}, we get
$
J_{1} = \frac{c_1^L - c_1^{a,l}}{H(a)} \big( 1 + \frac{z_1(\phi^L - \phi^{a,l})}{\ln c_1^L - \ln c_1^{a,l}} \big).
$
Then, expanding $\frac{c_1^L - c_1^{a,l}}{H(a)}$ with respect to the parameter $Q_0$ and noting that
$
\ln c_{1}^{a,l} = \ln c_{10}^a + \frac{c_{11}^{a,l}}{c_{10}^a}Q + \frac{2c_{12}^{a,l}c_{10}^a - (c_{11}^{a,l})^2}{2(c_{10}^a)^2}Q^2 + O(Q^3),
$
we can express $J_1$ in terms of the parameter $Q_0$. Thus, we can find the second-order term in $Q_0$ by substituting $c_{11}^{a,l}$, $c_{12}^{a,l}$, and $\phi_{1}^{a,l}$, which will lead to the first formula for $J_{12}$.
Similarly, from the second formula for $J_{1}$, we obtain,
$
J_1 = \frac{c_1^{b,r} - c_1^R}{H(1) - H(b)} \big( 1 + \frac{z_1(\phi^{b,r} - \phi^{R})}{\ln c_1^{b,r} - \ln c_1^{R}} \big).
$
We can then determine the second-order terms by substituting $c_{11}^{b,r}$, $c_{12}^{b,r}$, and $\phi_{1}^{b,r}$. This allows us to directly derive the formula for $J_{12}$.
To find the second-order terms of $c_1^b$, we first need to expand $\exp\big(z_1z_2(J_1+J_2)y\big)
$ as per \eqref{LemQuad2} and \eqref{Matching2}. Then, from the zeroth-order terms in the ninth line of \eqref{Matching}, we can obtain the expansion of $\exp\big({z_1z_2(J_1+J_2)y}\big)$ in terms of the parameter $Q_0$.
Now, by expanding $c_1^{b,m} = \exp\big({z_1z_2(J_1+J_2)y} c_1^{a,m}\big)$ from equation \eqref{Matching} and using Lemma \eqref{LemQuad2}, we can derive the expression for $c_{12}^b$. Finally, from the last equation of \eqref{Matching}, and using Lemmas \eqref{lem-1st} and \eqref{LemQuad2}, we can determine the expression for $T_2$.

\end{proof}

\begin{prop}\label{prop-cterms}
Second order intermediate concentration terms of the solution in $Q_0$ to the system \ref{Matching} are given by
\begin{equation*}
\begin{aligned}
  c_{12}^a = & - \dfrac{z_1+4z_2}{24z_1(z_1-z_2)^2c_{10}^a } - \dfrac{(\phi_{1}^{a} - \phi_{1}^{b})\alpha z_2}{(z_1-z_2)},\\
 c_{22}^a = &  \dfrac{4z_1+z_2}{24z_1(z_1-z_2)^2c_{10}^a } + \dfrac{(\phi_{1}^{a} - \phi_{1}^{b})\alpha z_1}{(z_1-z_2)},\\
  c_{12}^b = & - \dfrac{  z_1+4z_2}{24z_1(z_1-z_2)^2c_{10}^b } + \dfrac{(\phi_{1}^{a} - \phi_{1}^{b})(1-\beta) z_2}{(z_1-z_2)},\\
  c_{22}^b = &  \dfrac{4z_1+z_2}{24z_1(z_1-z_2)^2c_{10}^b } - \dfrac{(\phi_{1}^{a} - \phi_{1}^{b})(1-\beta) z_1}{(z_1-z_2)}, \\
 y_2 = &  \dfrac{(\phi_{1}^{a} - \phi_{1}^{b})y_0}{ H(1)T_0} - \frac{y_1}{c_{10}^a} \Big(\dfrac{z_2 \alpha (\phi_{0}^b-\phi_{0}^a)}{z_1-z_2} -  \dfrac{c_{10}^a(\phi_{0}^a - \phi_{0}^b)}{ H(1)T_0}- \dfrac{1}{z_1-z_2}\Big)\\
& + \dfrac{1}{2z_1^2(z_1-z_2)^2T_0}\Big(\dfrac{1}{\big(c_{10}^a\big)^2} - \dfrac{1}{\big(c_{10}^b\big)^2} \Big)   + \dfrac{ (\phi_{1}^{a} - \phi_{1}^{b})}{z_1(z_1-z_2)T_0} \Big( \dfrac{\alpha}{c_{10}^a} +  \dfrac{1-\beta}{c_{10}^b} \Big) \\
 &  - \frac{z_1z_2}{2T_0}\Big( T_0y_1 + T_1y_0 \Big)^2 + \frac{ (\phi_{0}^a - \phi_{0}^b)y_0}{ H(1)T_0c_{10}^a} \Big(\dfrac{z_2 \alpha (\phi_{0}^b-\phi_{0}^a)}{z_1-z_2} - \dfrac{1}{z_1-z_2}\Big)\\
 &  +\dfrac{J_{11}}{z_1^2z_2T_0^2} \Big(\dfrac{1}{c_{10}^b} - \dfrac{1 }{c_{10}^a} \Big) + \dfrac{  J_{10}(\phi_{0}^a - \phi_{0}^b) }{z_1^2z_2T_0^3H(1)} \Big(\dfrac{1}{c_{10}^b} - \dfrac{1 }{c_{10}^a} \Big).
\end{aligned}
\end{equation*}
\end{prop}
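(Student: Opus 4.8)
The plan is to carry out the order-$Q_0^2$ matching in the governing system \eqref{Matching}--\eqref{Matching2}, feeding in the zeroth- and first-order data of Proposition \ref{prop-zeroth}, Lemma \ref{lem-1st} and Proposition \ref{prop-first}, together with the second-order intermediate relations already in hand (Lemmas \ref{LemQuad1}, \ref{LemQuad2}, \ref{LemJ2}). The quantities to be pinned down are the individual concentrations $c_{12}^a, c_{22}^a, c_{12}^b, c_{22}^b$ and the length correction $y_2$; the fluxes $J_{12},J_{22}$, hence $T_2=J_{12}+J_{22}$, are already supplied by Lemma \ref{LemJ2}.

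For the pair $(c_{12}^a, c_{22}^a)$ I would set up a $2\times 2$ linear system. Lemma \ref{LemQuad1} provides one row, the charge-neutrality combination $z_1 c_{12}^a + z_2 c_{22}^a = -\frac{z_1+z_2}{24 z_1(z_1-z_2) c_{10}^a}$. The second, independent row is obtained exactly as the complementary relation was obtained at first order in Proposition \ref{prop-first}: expanding the flux identities together with the boundary-layer jumps $c_k^{a,m}=e^{z_k(\phi^a-\phi^{a,m})}c_k^a$ to the appropriate order. At zeroth order these force the $H$-linear interpolation $c_{k0}^a=(1-\alpha)c_k^L+\alpha c_k^R$; at first order they yield $c_{11}^a+c_{21}^a=\alpha(\phi_0^a-\phi_0^b)$; and at second order the same route gives $c_{12}^a+c_{22}^a=\frac{1}{8z_1(z_1-z_2)c_{10}^a}+\alpha(\phi_1^a-\phi_1^b)$, where the geometric weight $\alpha=H(a)/H(1)$ and the first-order potential drop $\phi_1^a-\phi_1^b$ reappear, corrected by a Maclaurin term. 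Solving the two rows gives $c_{12}^a$ and $c_{22}^a$. The pair $(c_{12}^b, c_{22}^b)$ is identical in structure but driven by the right interval $[b,1]$, whose geometric factor is $H(1)-H(b)=(1-\beta)H(1)$; this replaces $\alpha$ by $1-\beta$ and yields the third and fourth formulas.

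For $y_2$ I would expand the three middle-region identities of \eqref{Matching} -- the potential balance $\phi^{b,m}=\phi^{a,m}-(z_1J_1+z_2J_2)y$, the transport relation $c_1^{b,m}=e^{z_1z_2(J_1+J_2)y}c_1^{a,m}-\frac{Q_0 J_1}{z_1(J_1+J_2)}\big(1-e^{z_1z_2(J_1+J_2)y}\big)$, and the total-flux identity on the last line -- to order $Q_0^2$. The key step is the second-order Taylor expansion of the exponential: writing $(J_1+J_2)y=T_0y_0+K_1Q_0+K_2Q_0^2+O(Q_0^3)$ with $K_1=T_0y_1+T_1y_0$ and $K_2=T_2y_0+T_1y_1+T_0y_2$, its quadratic coefficient contributes both $K_2$ (which carries $y_2$) and the square $K_1^2$, the latter being the origin of the $-\frac{z_1z_2}{2T_0}(T_0y_1+T_1y_0)^2$ term. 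Substituting $T_2=J_{12}+J_{22}$ and $J_{11}$ from Lemma \ref{LemJ2} and Proposition \ref{prop-first}, together with the differences $\frac{1}{(c_{10}^a)^2}-\frac{1}{(c_{10}^b)^2}$ and $\frac{1}{c_{10}^b}-\frac{1}{c_{10}^a}$ generated by the $a$-to-$b$ transport step, and solving the resulting scalar equation for $y_2$ produces the last formula.

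The main obstacle is the algebraic bookkeeping concentrated in the $y_2$ computation: every lower-order quantity from Proposition \ref{prop-zeroth} and Proposition \ref{prop-first} (the explicit $\phi_0^{a},\phi_0^b,c_{10}^a,c_{10}^b,y_0,y_1,J_{10},J_{11}$) must be carried exactly, and the contributions from the quadratic exponential expansion, from the cross-products of first-order corrections (the $y_1/c_{10}^a$, $K_1^2$ and $J_{11}$ pieces), and from the second-order intermediate data of Lemmas \ref{LemQuad1} and \ref{LemQuad2} must be combined without error. As consistency checks I would confirm that each derived concentration pair reproduces its weighted sum from Lemma \ref{LemQuad1} -- the $\phi_1^a-\phi_1^b$ contributions must cancel under the $z_1,z_2$-weighting, as they do -- and cross-validate the closed-form $y_2$ against a direct numerical solution of the governing system at small $Q_0$.
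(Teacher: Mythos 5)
Your proposal is correct and follows essentially the same route as the paper: the paper likewise obtains $c_{12}^a,c_{22}^a$ (and their $b$-counterparts) by combining the sum of the $J_{12},J_{22}$ expressions from Lemma \ref{LemJ2} with the weighted relation $z_1c_{12}^a+z_2c_{22}^a$ of Lemma \ref{LemQuad1} --- exactly your $2\times 2$ system --- and it likewise gets $y_2$ by solving the second-order expansion of the middle-region transport identity for $K_2=T_2y_0+T_1y_1+T_0y_2$, with the quadratic term of the exponential producing the $-\frac{z_1z_2}{2T_0}(T_0y_1+T_1y_0)^2$ contribution. Your consistency checks (cancellation of the $\phi_1^a-\phi_1^b$ terms under the $z_1,z_2$-weighting, and the sign flip accompanying the $\alpha\mapsto 1-\beta$ replacement on $[b,1]$) are sound and match the stated formulas.
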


\begin{proof}
Initially, we start by adding up the expressions for $J_{12}$ and $J_{22}$ as outlined in the equations for $J_{12}$ and $J_{22}$ in Lemma \ref{LemJ2}, using careful simplification procedures. Afterward, we include $c_{22}^a$ and $c_{22}^b$ into the derived expression using the relevant expressions from Lemma \eqref{LemQuad1}. Through a thorough computational analysis, we determine the expressions for $c_{12}^a$ and $c_{22}^a$.

In the process of determining the variable $y_2$, our initial step involves solving the equation for $c_{12}^{b}$ as presented in Lemma \eqref{LemJ2}, specifically for $K_2$. Following this, we proceed to substitute the expressions for $K_1$ and $K_2$ and subsequently solve the equation for $y_2$, resulting in a simplified expression that provides the formula for $y_2$.
\end{proof}

Utilizing the procedure outlined earlier, we shall extend our analysis to encompass the remaining four identities specified in Equation \eqref{Matching}. With the foundational insights obtained from Proposition \eqref{prop-zeroth} and Lemma \eqref{lem-1st}, we can proceed to systematically deduce the second-order terms as delineated below.

\begin{prop}\label{prop-lastJ}
Under the electroneutrality boundary conditions, where $\phi^L=V, \phi^R=0$, $z_lL_1=-z_2L_2=L$ and $z_lR_1=-z_2R_2=R$, the following results hold,
$$
\begin{aligned}
&J_{12} = \dfrac{z_1z_2(\phi_1^a-\phi_1^b)}{ H(1)(z_1-z_2)}\Big(\dfrac{1}{z_1}+ \dfrac{(V-\phi_0^a)}{\ln c_1^L - \ln c_{10}^a} - \dfrac{(V-\phi_0^a)(c_1^L-c_{10}^a)}{(\ln c_1^L - \ln c_{10}^a)^2c_{10}^a}  \Big)\\
& \hspace*{.4in}- \dfrac{z_1(c_1^L-c_{10}^a) }{\alpha H(1)(\ln c_1^L - \ln c_{10}^a)}\Big( \phi_2^a + \dfrac{z_1z_2\alpha(\phi_0^b - \phi_0^a)}{2\big(z_1(z_1-z_2)c_{10}^a\big)^2}-\dfrac{(z_1+z_2)}{6\big(z_1(z_1-z_2)c_{10}^a\big)^2}\Big)\\
 &\hspace*{.4in} - \dfrac{  z_1 z_2(\phi_0^a-\phi_0^b)}{ H(1)(\ln c_1^L - \ln c_{10}^a)(z_1-z_2)^2}\Bigg((z_1-z_2)\phi_1^a - \dfrac{(z_1-z_2)(c_1^L -c_{10}^a)\phi_1^a}{(\ln c_1^L - \ln c_{10}^a)c_{10}^a} -\dfrac{1}{2z_1c_{10}^a}\\
& \hspace*{1.5in} + \dfrac{(c_1^L-c_{10}^a)}{2z_1(\ln c_1^L - \ln c_{10}^a)(c_{10}^a)^2} + \dfrac{z_2(c_{11}^a+c_{21}^a)(V-\phi_0^a)(c_1^L+c_{10}^a)}{2(\ln c_1^L - \ln c_{10}^a)(c_{10}^a)^2}\Bigg),
\end{aligned}
$$
\begin{equation*}
\begin{aligned}
J_{22}&= \dfrac{z_1z_2(\phi_1^a-\phi_1^b)}{ H(1)(z_2-z_1)}\Big( \dfrac{1}{z_2} + \dfrac{(V-\phi_0^a)}{\ln c_1^L - \ln c_{10}^a} - \dfrac{(V-\phi_0^a)(c_1^L-c_{10}^a)}{(\ln c_1^L - \ln c_{10}^a)^2c_{10}^a}  \Big)\\
&\hspace*{.4in}+ \dfrac{z_1(c_1^L-c_{10}^a) }{\alpha H(1)(\ln c_1^L - \ln c_{10}^a)}\Big( \phi_2^a + \dfrac{z_1z_2\alpha(\phi_0^b - \phi_0^a)}{2\big(z_1(z_1-z_2)c_{10}^a\big)^2}-\dfrac{(z_1+z_2)}{6\big(z_1(z_1-z_2)c_{10}^a\big)^2}\Big)\\
 &\hspace*{.4in} + \dfrac{  z_1 z_2(\phi_0^a-\phi_0^b)}{ H(1)(\ln c_1^L - \ln c_{10}^a)(z_1-z_2)^2}\Bigg((z_1-z_2)\phi_1^a - \dfrac{(z_1-z_2)(c_1^L -c_{10}^a)\phi_1^a}{(\ln c_1^L - \ln c_{10}^a)c_{10}^a} -\dfrac{1}{2z_1c_{10}^a}\\
& \hspace*{1.5in} + \dfrac{(c_1^L-c_{10}^a)}{2z_1(\ln c_1^L - \ln c_{10}^a)(c_{10}^a)^2} + \dfrac{z_2(c_{11}^a+c_{21}^a)(V-\phi_0^a)(c_1^L+c_{10}^a)}{2(\ln c_1^L - \ln c_{10}^a)(c_{10}^a)^2}\Bigg),
\end{aligned}
\end{equation*}

\begin{equation*}
\begin{aligned}
&\phi_{2}^a = \big(\mathcal{B}_1 \mathcal{C} - (z_1- z_2)y_0 \mathcal{B}_1 \mathcal{A}_2 - z_2 y_0 \mathcal{B}_1 \dfrac{(\phi_{1}^{b} - \phi_{1}^{a})}{ H(1)}  + \mathcal{B}_2 - \mathcal{A}_2\big)/ \big(\mathcal{A}_1 - \mathcal{B}_1 +(z_1-z_2)y_0 \mathcal{A}_1 B_1\big), \\
 &\phi_{2}^b = \Big(1 -(z_1-z_2)y_0 \mathcal{A}_1\Big) \phi_{2}^a + \mathcal{C} - (z_1- z_2)y_0 \mathcal{A}_2 - z_2 y_0 \dfrac{(\phi_{1}^{b} - \phi_{1}^{a})}{ H(1)}, 
\end{aligned}
\end{equation*}
where,
\begin{equation*}
\begin{aligned}
\mathcal{A}_1 = & - \dfrac{z_1(c_1^L-c_{10}^a) }{\alpha H(1)(\ln c_1^L - \ln c_{10}^a)}, \quad \quad   \mathcal{B}_1 =\dfrac{z_1(c_{10}^b - c_1^R) }{(1-\beta) H(1)(\ln c_{10}^{b} - \ln c_{1}^R)},\\
\mathcal{A}_2 =& \dfrac{z_1z_2(\phi_{1}^a-\phi_{1}^b)}{(z_1-z_2) H(1)}\Big(\dfrac{1}{z_1}+ \dfrac{(V-\phi_{0}^a)}{\ln c_1^L - \ln c_{10}^a} - \dfrac{(V-\phi_{0}^a)(c_1^L-c_{10}^a)}{(\ln c_1^L - \ln c_{10}^a)^2c_{10}^a}  \Big)\\
& - \dfrac{z_1(c_1^L-c_{10}^a) }{\alpha H(1)(\ln c_1^L - \ln c_{10}^a)}\Big( \dfrac{z_1z_2\alpha(\phi_{0}^b - \phi_{0}^a)}{2\big(z_1(z_1-z_2)c_{10}^a\big)^2}-\dfrac{(z_1+z_2)}{6\big(z_1(z_1-z_2)c_{10}^a\big)^2}\Big)\\
 & - \dfrac{  z_1 z_2(\phi_{0}^a-\phi_{0}^b)}{ H(1)(\ln c_1^L - \ln c_{10}^a)(z_1-z_2)^2}\Bigg((z_1-z_2)\phi_{1}^a - \dfrac{(z_1-z_2)(c_1^L -c_{10}^a)\phi_{1}^a}{(\ln c_1^L - \ln c_{10}^a)c_{10}^a} -\dfrac{1}{2z_1c_{10}^a}\\
& \hspace*{1.2in} + \dfrac{(c_1^L-c_{10}^a)}{2z_1(\ln c_1^L - \ln c_{10}^a)(c_{10}^a)^2} + \dfrac{z_2(c_{11}^a+c_{21}^a)(V-\phi_{0}^a)(c_1^L+c_{10}^a)}{2(\ln c_1^L - \ln c_{10}^a)(c_{10}^a)^2}\Bigg),
\end{aligned}
\end{equation*}

\begin{equation*}
\begin{aligned}
\mathcal{B}_2 =& \dfrac{z_1z_2 (\phi_{1}^a-\phi_{1}^b)}{(z_1-z_2)H(1)}\Big(\frac{1}{z_1}+ \dfrac{\phi_{0}^b}{\ln c_{10}^{b} - \ln c_{1}^R} - \dfrac{\phi_{0}^b(c_{10}^b-c_1^R)}{(\ln c_{10}^{b} - \ln c_{1}^R)^2c_{10}^b} \Big) \\
& +\dfrac{z_1(c_{10}^b - c_1^R) }{(1-\beta) H(1)(\ln c_{10}^{b} - \ln c_{1}^R)}\Big(  \dfrac{z_1z_2(1-\beta) (\phi_{0}^a- \phi_{0}^b) }{2\big(z_1(z_1-z_2)c_{10}^b\big)^2}- \dfrac{(z_1+ z_2)}{6\big(z_1(z_1-z_2)c_{10}^b\big)^2}\Big)\\
& - \dfrac{  z_1 z_2(\phi_{0}^b-\phi_{0}^a)}{ H(1)(\ln c_{10}^{b} - \ln c_{1}^R)(z_1-z_2)^2}\Bigg((z_1-z_2)\phi_{1}^b - \dfrac{(z_1-z_2)(c_{10}^b - c_1^R)\phi_{1}^b}{(\ln c_{10}^{b} - \ln c_{1}^R)c_{10}^b} -\dfrac{1}{2z_1c_{10}^b}\\
& \hspace*{1.1in} + \dfrac{(c_{10}^b - c_1^R)}{2z_1(\ln c_{10}^{b} - \ln c_{1}^R)(c_{10}^b)^2} + \dfrac{z_2(c_{11}^b+c_{21}^b)\phi_{0}^b(c_1^R+c_{10}^b)}{2(\ln c_{10}^{b} - \ln c_{1}^R)(c_{10}^b)^2}\Bigg),
\end{aligned}
\end{equation*}
and,
\begin{equation*}
\begin{aligned}
\mathcal{C} =& - \dfrac{z_1^2c_{11}^a+z_2^2c_{21}^a}{2\big(z_1(z_1-z_2)c_{10}^a \big)^2} + \dfrac{z_1^2c_{11}^b+z_2^2c_{21}^b}{2\big(z_1(z_1-z_2)c_{10}^b \big)^2} + \dfrac{(z_1+z_2)\big((c_{10}^b)^2 - (c_{10}^a)^2\big)}{12\big(z_1(z_1-z_2)c_{10}^ac_{10}^b \big)^2}-   I_1y_1\\
  & + \frac{(z_1-z_2)(l-r)V y_1}{ H(1)(\ln l - \ln r)c_{10}^a} \Big(\dfrac{z_2 \alpha (\phi_{0}^b-\phi_{0}^a)}{z_1-z_2} -  \dfrac{c_{10}^a(\phi_{0}^a - \phi_{0}^b)}{ H(1)T_0}- \dfrac{1}{z_1-z_2}\Big)\\
  &+ \dfrac{z_2V}{2z_1(z_1-z_2)^2(\ln l - \ln r)}\Big(\dfrac{1}{(c_{10}^a)^2} - \dfrac{1}{(c_{10}^b)^2} \Big) \\
&+ \dfrac{z_1z_2(\phi_{1}^{a} - \phi_{1}^{b})V}{(\ln l - \ln r)}\Big( \dfrac{1}{z_1(z_1-z_2)} \big( \dfrac{\alpha}{c_{10}^a} +  \dfrac{1-\beta}{c_{10}^b} \big) +\dfrac{y_0}{ H(1)}\Big)\\
& - \frac{z_1^2z_2^2V}{2(\ln l - \ln r)}\big( T_0y_1 + T_1y_0 \big)^2 + \frac{z_1z_2V(\phi_{0}^a - \phi_{0}^b)y_0}{ H(1)c_{10}^a(\ln l - \ln r)} \Big(\dfrac{z_2 \alpha (\phi_{0}^b-\phi_{0}^a)}{z_1-z_2} - \dfrac{1}{z_1-z_2}\Big)\\
 & +\dfrac{J_{11}V}{z_1T_0(\ln l - \ln r)} \Big(\dfrac{1}{c_{10}^b} - \dfrac{1 }{c_{10}^a} \Big) + \dfrac{J_{10}(\phi_{0}^a - \phi_{0}^b)V }{z_1T_0^2H(1)(\ln l - \ln r)} \Big(\dfrac{1}{c_{10}^b} - \dfrac{1 }{c_{10}^a} \Big).
\end{aligned}
\end{equation*}
Furthermore, $z_1 c_1^L= z_1 L_1= L, z_1 c_1^R= z_1 R_1= R $ due to electroneutrality and $T_0, T_1$ were defined in \ref{Abbrev}.
\end{prop}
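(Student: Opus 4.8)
The plan is to extract the second-order content of the two flux-matching identities in \eqref{Matching} for each species and then to close the resulting system using the potential-continuity relation. First I would take the left-interval identity $J_1 = \frac{c_1^L - c_1^{a,l}}{H(a)}\bigl(1 + \frac{z_1(\phi^L-\phi^{a,l})}{\ln c_1^L - \ln c_1^{a,l}}\bigr)$ and expand it to order $Q_0^2$ exactly as in the proof of Lemma \ref{LemJ2}, using the logarithmic expansion $\ln c_1^{a,l} = \ln c_{10}^a + (c_{11}^{a,l}/c_{10}^a)Q_0 + \dots$ together with the intermediate substitutions of Lemma \ref{LemQuad2} and the concentration formulas of Proposition \ref{prop-cterms}. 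Under electroneutrality ($z_1 c_1^L = L$, $z_1 c_1^R = R$, $\phi^L = V$, $\phi^R = 0$), and after inserting $c_{12}^a, c_{22}^a$ from Proposition \ref{prop-cterms} to remove the second-order concentrations, the expression collapses to an affine function of the single remaining second-order unknown $\phi_2^a$; I would record this as $J_{12} = \mathcal{A}_1\phi_2^a + \mathcal{A}_2$, which defines $\mathcal{A}_1$ and $\mathcal{A}_2$. Repeating the computation on the right-interval expression $J_1 = \frac{c_1^{b,r}-c_1^R}{H(1)-H(b)}\bigl(1 + \frac{z_1(\phi^{b,r}-\phi^R)}{\ln c_1^{b,r}-\ln c_1^R}\bigr)$ yields $J_{12} = \mathcal{B}_1\phi_2^b + \mathcal{B}_2$, which defines $\mathcal{B}_1$ and $\mathcal{B}_2$.

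Second, I would derive the affine link between $\phi_2^b$ and $\phi_2^a$. The seventh identity $\phi^{b,m} = \phi^{a,m} - (z_1 J_1 + z_2 J_2)y$ of \eqref{Matching}, expanded to order $Q_0^2$, reads $\phi_2^{b,m} = \phi_2^{a,m} - (I_0 y_2 + I_1 y_1 + I_2 y_0)$ with $I_k$ as in \eqref{Abbrev}. Because the coefficients of $\phi_2^a$ in the left-interval expressions for $J_{12}$ and $J_{22}$ are $\mathcal{A}_1$ and $-\mathcal{A}_1$ respectively, the combination $I_2 = z_1 J_{12} + z_2 J_{22}$ carries $(z_1-z_2)\mathcal{A}_1$ in front of $\phi_2^a$; this is the source of the $(z_1-z_2)y_0\mathcal{A}_1$ term in the statement. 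Eliminating the midpoint potentials through the two relations for $\phi_2^a - \phi_2^{a,m}$ and $\phi_2^b - \phi_2^{b,m}$ supplied by Lemma \ref{LemQuad1} then produces the stated affine relation $\phi_2^b = (1-(z_1-z_2)y_0\mathcal{A}_1)\phi_2^a + \mathcal{C} - (z_1-z_2)y_0\mathcal{A}_2 - z_2 y_0 (\phi_1^b - \phi_1^a)/H(1)$, where $\mathcal{C}$ gathers the lower-order data assembled from Proposition \ref{prop-first}, the formula for $y_2$ in Proposition \ref{prop-cterms}, and the abbreviations of \eqref{Abbrev}.

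Third, equating the two expressions for the same flux gives $\mathcal{A}_1\phi_2^a + \mathcal{A}_2 = \mathcal{B}_1\phi_2^b + \mathcal{B}_2$; substituting the affine link for $\phi_2^b$ turns this into a single scalar linear equation in $\phi_2^a$, whose solution is exactly the displayed quotient for $\phi_2^a$, its denominator $\mathcal{A}_1 - \mathcal{B}_1 + (z_1-z_2)y_0\mathcal{A}_1\mathcal{B}_1$ being the coefficient of $\phi_2^a$ after collecting terms. Back-substitution into the affine link yields $\phi_2^b$, and feeding $\phi_2^a$ into $\mathcal{A}_1\phi_2^a + \mathcal{A}_2$ produces the simplified closed form for $J_{12}$; the species-two flux $J_{22}$ follows from the identical procedure applied to the two expressions for $J_2$, with the roles of $z_1$ and $z_2$ interchanged and the sign reversals dictated by electroneutrality.

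I expect the principal obstacle to be purely computational: carrying the order-$Q_0^2$ Taylor expansions of the logarithmic and exponential factors through the flux identities while correctly threading the many intermediate substitutions from Lemmas \ref{LemQuad1}--\ref{LemJ2} and Proposition \ref{prop-cterms}, and then verifying that the bulky coefficients indeed collapse to the compact forms $\mathcal{A}_1, \mathcal{A}_2, \mathcal{B}_1, \mathcal{B}_2, \mathcal{C}$ under electroneutrality. The conceptual content---a $2\times 2$ affine system in $(\phi_2^a, \phi_2^b)$ solved by elimination---is routine; the entire difficulty lies in the disciplined algebraic simplification, which is why verification with a symbolic algebra system is essentially indispensable here.
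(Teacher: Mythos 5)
Your proposal is correct and takes essentially the same route as the paper: its (very terse) proof likewise starts from the flux expansions of Lemma \ref{LemJ2}, substitutes the relations of Lemma \ref{LemQuad2} and Proposition \ref{prop-cterms}, and the affine structure you exploit --- $J_{12}=\mathcal{A}_1\phi_2^a+\mathcal{A}_2=\mathcal{B}_1\phi_2^b+\mathcal{B}_2$ with the link between $\phi_2^a$ and $\phi_2^b$ obtained from the second-order expansion of $\phi^{b,m}=\phi^{a,m}-(z_1J_1+z_2J_2)y$ together with Lemma \ref{LemQuad1} and the $y_2$ formula --- is exactly what the paper records in the remark following the proposition. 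Your elimination reproduces the stated denominator $\mathcal{A}_1-\mathcal{B}_1+(z_1-z_2)y_0\mathcal{A}_1\mathcal{B}_1$ and numerator, including the $(z_1-z_2)y_0\mathcal{A}_1$ and $z_2y_0(\phi_1^b-\phi_1^a)/H(1)$ contributions from $I_2y_0$, so what remains is only the algebraic bookkeeping the paper itself omits.
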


\begin{proof} 
Starting from the expressions for $J_{12}$ and $J_{22}$ derived in Lemma \ref{LemJ2} and employing the relationships established in Lemma \ref{LemQuad2} and Proposition \ref{prop-cterms}, and through meticulous computations, one can directly derive the second-order terms for fluxes and electric potentials.
\end{proof}

\begin{rem}\em 
In Proposition \eqref{prop-lastJ},  it is noteworthy that the following relationships hold:
$$
J_{12} = \mathcal{A}_1 \phi_{2}^a + \mathcal{A}_2 = \mathcal{B}_1 \phi_{2}^b + \mathcal{B}_2, \qquad   J_{22} = -\mathcal{A}_1 \phi_{2}^a + \mathcal{A}_3 = -\mathcal{B}_1 \phi_{2}^b + \mathcal{B}_3 ,
$$
wherein,
$$
\mathcal{A}_3 = -\mathcal{A}_2 + \dfrac{(\phi_{1}^{b} - \phi_{1}^{a})}{ H(1)}, \qquad  \mathcal{B}_3  = -\mathcal{B}_2 + \dfrac{(\phi_{1}^{b} - \phi_{1}^{a})}{ H(1)}.
$$
\end{rem}
\vskip .4in

\section{Permanent Charge and Channel Geometry Effects on Fluxes and I-V Relations.}\label{sec-num}  
\setcounter{equation}{0} 
In this section, we examine how permanent charges and the shape of the channel impact the movement of individual ions and the current-voltage (I-V) relations under electroneutrality conditions. When the absolute value of $Q_0$ (a measure of charge) is small, the flux ($\mathcal{J}_k$) for the $k$-th type of ion and the current ($\mathcal{I}$) can be expressed as follows:
$$
\mathcal{J}_k = D_kJ_{k0} + D_kJ_{k1} Q_0 + D_kJ_{k2} Q_0^2 + O(Q_0^3), \quad \mathcal{I} = \mathcal{I}_0
+ \mathcal{I}_1 Q_0 + \mathcal{I}_2 Q_0^2+ O(Q_0^3),
$$
where 
$$
\mathcal{I}_0 = z_1D_1J_{10} + z_2D_2J_{20}, \quad \mathcal{I}_1 = z_1D_1J_{11} + z_2D_2J_{21}, \quad \mathcal{I}_2 = z_1D_1J_{12} + z_2D_2J_{22}.
$$
The quantities $J_{1k}$ and $J_{2k}$, where $k=0,1,2$, capture the primary effects of permanent charges and channel shape on the flow of ions. We will analyze these quantities to understand their impact.\\


\subsection{Exploring First-Order Effects: Unraveling the Impact of Permanent Charges on Fluxes.}\label{sec-num01}

We start by simplifying specific findings from \cite{JLZ15} and presenting numerical results for the first-order terms. Initially, we articulate Theorem 4.8 from the same paper, offering numerical insights, and subsequently expand on our findings based on further numerical investigations.

\begin{thm}\label{thm-JLZ}
Suppose $B \neq 1$ where $B$ defined in \eqref{AB-LR}. Let $V_q^1$ and $V_q^2$ be as,
\begin{equation*}
\begin{aligned}
V_q^1 = V_q^1(L,R) = - \frac{\ln L - \ln R}{z_2(1-B)}, \qquad V_q^2 = V_q^2(L,R) = - \frac{\ln L - \ln R}{z_1(1-B)},
\end{aligned}
\end{equation*}
then the following cases arise:
\begin{itemize}
\item[\rm(i)] if $V_q^1 < 0 < V_q^2$, then, for $V > V_1^q$, a small positive $Q_0$ decreases $|J_1|$, and for $V < V_1^q$, a small positive $Q_0$ enhances $|J_1|$. Similarly, for $V > V_2^q$, a small positive $Q_0$ decreases $|J_2|$, and for $V < V_2^q$, a small positive $Q_0$ strengthens $|J_2|$; more precisely,
  \begin{itemize}
   \item[\rm(i1)] for $V \in (V_q^1,V_q^2)$,  $J_{10}J_{11}<0$ and $J_{20}J_{21}>0$;
   \item[\rm(i2)] for $V < V_q^1$,  $J_{10}J_{11}>0$ and $J_{20}J_{21}>0$;
     \item[\rm(i3)] for $V > V_q^2$,  $J_{10}J_{11}<0$ and $J_{20}J_{21}<0$;
  \end{itemize}
\item[\rm(ii)] if $V_q^1 > 0 > V_q^2$, then, for $V < V_q^1$, a small positive $Q_0$ decreases $|J_1|$, and for $V > V_q^1$, a small positive $Q_0$ enhances $|J_1|$. Similarly, for $V < V_q^2$, a small positive $Q_0$ decreases $|J_2|$, and for $V > V_q^2$, a small positive $Q_0$ strengthens $|J_2|$; more precisely,
  \begin{itemize}
    \item[\rm(ii1)] For $V \in (V_q^2, V_q^1)$, $J_{10}J_{11} < 0$ and $J_{20}J_{21} > 0$;
    \item[\rm(ii2)] For $V > V_q^1$, $J_{10}J_{11} > 0$ and $J_{20}J_{21} > 0$;
    \item[\rm(ii3)] For $V < V_q^2$, $J_{10}J_{11} < 0$ and $J_{20}J_{21} < 0$.
    \end{itemize} 
  
\end{itemize}

\end{thm}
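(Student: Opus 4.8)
The plan is to reduce the whole statement to the signs of two explicitly factored quantities. Writing $\ell:=\ln L-\ln R$, I would substitute the closed forms of $J_{k0},J_{k1}$ from Propositions~\ref{prop-zeroth} and~\ref{prop-first} into the products $J_{10}J_{11}$ and $J_{20}J_{21}$. The decisive observation is that each product splits into a $V$-independent constant, a perfect square, and a single affine-in-$V$ factor:
\[
J_{10}J_{11}=P_1\,(z_1V+\ell)^2\bigl(z_2(1-B)V+\ell\bigr),\qquad P_1=\frac{(L-R)A}{z_1(z_1-z_2)H(1)^2\ell^3},
\]
\[
J_{20}J_{21}=P_2\,(z_2V+\ell)^2\bigl(z_1(1-B)V+\ell\bigr),\qquad P_2=\frac{(L-R)A}{z_2(z_1-z_2)H(1)^2\ell^3}.
\]
Since the squared factors are nonnegative and vanish only at isolated voltages, off those points the sign of each product is $\sign(P_i)$ times the sign of its affine factor.

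Using $B\neq1$, the affine factors have the unique roots $V_q^1=-\ell/(z_2(1-B))$ and $V_q^2=-\ell/(z_1(1-B))$, so that $\sign\bigl(z_2(1-B)V+\ell\bigr)=\sign\bigl(z_2(1-B)\bigr)\sign(V-V_q^1)$ and similarly for $V_q^2$. It then remains only to evaluate the two $V$-independent signs. Here I would invoke the structural conventions $z_1>0>z_2$ (hence $z_1-z_2>0$), $h>0$ (hence $H$ increasing and positive, $0<\alpha<\beta<1$, and $(1-\alpha)L+\alpha R,\ (1-\beta)L+\beta R>0$), and $L,R>0$. Two facts drive the computation: monotonicity of $\ln$ forces $L-R$ and $\ell$ to share a sign, giving $(L-R)/\ell^3>0$; and by \eqref{AB-LR}, $A$ carries the sign of $-\ell$ since $\beta-\alpha>0$ while its remaining factors are positive. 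Feeding these in, both constants collapse to the same value,
\[
\sign(P_1)\,\sign\bigl(z_2(1-B)\bigr)=\sign(P_2)\,\sign\bigl(z_1(1-B)\bigr)=\sign\bigl(\ell(1-B)\bigr).
\]

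Finally, this common sign is precisely the case dichotomy: $V_q^1<0<V_q^2$ is equivalent to $\ell/(1-B)<0$, i.e.\ $\ell(1-B)<0$ (case (i)), while $V_q^1>0>V_q^2$ is $\ell(1-B)>0$ (case (ii)). Thus in case (i) one obtains $\sign(J_{10}J_{11})=-\sign(V-V_q^1)$ and $\sign(J_{20}J_{21})=-\sign(V-V_q^2)$, and in case (ii) both flip. Reading these across the three voltage windows fixed by the ordering of $V_q^1,V_q^2$ yields the enumerated conclusions (i1)--(i3) and (ii1)--(ii3); the monotonicity statements follow because $J_{k0}J_{k1}<0$ means $J_{k0}$ and $J_{k1}$ have opposite signs, so a small $Q_0>0$ drives $J_k$ toward zero (decreasing $|J_k|$), whereas $J_{k0}J_{k1}>0$ increases $|J_k|$.

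The main obstacle I anticipate is exactly this sign bookkeeping, specifically proving that the explicit $L$-versus-$R$ dependence cancels, so that $\sign(P_i)$ times the slope sign depends only on $\sign\bigl(\ell(1-B)\bigr)$ rather than separately on $\sign(L-R)$. That cancellation is what makes the conclusions uniform across $L>R$ and $L<R$ and is the conceptual core of the argument; the only side conditions to track are $B\neq1$ (so the $V_q^i$ are finite and the factors genuinely affine) and the isolated voltages where $z_1V+\ell$ or $z_2V+\ell$ vanishes, where the strict inequalities degenerate.
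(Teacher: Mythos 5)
Your argument is correct, and I verified the algebra: with $\ell=\ln L-\ln R$, the closed forms in Propositions \ref{prop-zeroth} and \ref{prop-first} do factor as you claim, $J_{10}J_{11}=P_1(z_1V+\ell)^2\bigl(z_2(1-B)V+\ell\bigr)$ and $J_{20}J_{21}=P_2(z_2V+\ell)^2\bigl(z_1(1-B)V+\ell\bigr)$; moreover $\sign(A)=-\sign(\ell)$ by \eqref{AB-LR} (using $\beta>\alpha$ and positivity of the averaged concentrations), $(L-R)/\ell^3>0$, and hence $\sign(P_1)\sign\bigl(z_2(1-B)\bigr)=\sign(P_2)\sign\bigl(z_1(1-B)\bigr)=\sign\bigl(\ell(1-B)\bigr)=\sign(V_q^1)=-\sign(V_q^2)$, from which (i1)--(i3) and (ii1)--(ii3) follow exactly as you read them off. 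Be aware, though, that you should compare your route not to a proof in this manuscript but to its absence: the paper merely restates Theorem 4.8 of \cite{JLZ15} and supports it numerically (the heatmaps of Figures \ref{Fig-Heatmap_J-1} and \ref{Fig-Heatmap_J-2}, computed both by locating $V_q^1,V_q^2$ and by root-finding without them). So your proposal supplies the analytic proof -- in essence reconstructing the argument of \cite{JLZ15}, where the first-order analysis likewise reduces to the sign of the affine factors $z_2(1-B)V+\ell$ and $z_1(1-B)V+\ell$ -- while the manuscript's approach buys only verification plus a visualization that extends to the second-order terms, for which no factorization is available. Two small points in your favor: you correctly flag the isolated voltages $V=-\ell/z_1$ and $V=-\ell/z_2$, where the squared factors (and hence $J_{10}$, $J_{11}$, respectively $J_{20}$, $J_{21}$) vanish, so the strict inequalities of the theorem degenerate to equalities at interior points of the stated windows -- a caveat the theorem statement itself glosses over, and harmless since there $J_{k0}=0$ and the claim about $|J_k|$ is vacuous; and your implicit assumption $\ell\neq 0$ is legitimate because either case hypothesis forces $V_q^1,V_q^2\neq 0$, hence $L\neq R$.
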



The roots $V_q^1$ and $V_q^2$ in Theorem \ref{thm-JLZ} represent the solutions for $J_{10}J_{11}$ and $J_{20}J_{21}$, respectively, allowing us to investigate the impact of incorporating linear terms $J_{11}$ or $J_{21}$. However, this method becomes impractical for higher-order terms due to the complexity of computations, rendering analytical solutions unattainable.

The intricate nature of the second-order terms, specifically the fluxes $J_{12}$ and $J_{22}$ discussed in Section \ref{sec-second}, necessitates numerical approaches to determine their roots. Therefore, we turn to Python, leveraging the Numpy and Matplotlib libraries, to perform calculations for zeroth, first, and second-order terms. Additionally, numerical tools are employed to identify flux roots, facilitating the study of their signs across diverse regions.

We initially validate our computational approach and analytical results from Theorem \ref{thm-JLZ} by obtaining zeroth and first-order terms. Our numerical investigations not only confirm these results but also provide additional insights. Subsequently, we explore second-order terms and their impact on fluxes.

The numerical findings in Figures \ref{Fig-Heatmap_J-1} and \ref{Fig-Heatmap_J-2} affirm discussed scenarios, considering a fixed right boundary concentration $R=1$ while varying $L$ between $0$ and $2$. To validate, we employed two approaches: calculating $V_q^1$ and $V_q^2$ and determining signs on each interval, and numerically identifying roots without explicitly calculating $V_q^1$ and $V_q^2$. This alternate approach proves advantageous when incorporating second-order terms in the next section, where obtaining roots analytically could pose challenges.

In Figure \ref{Fig-Heatmap_J-1}, we initially present individual heatmaps indicating the signs of $J_{10}\cdot J_{11}$ and $J_{20}\cdot J_{21}$ to clarify how each flux change. The red region are the ones where $J_{10}$ and $J_{11}$ (or equivalently $J_{20}$ and $J_{21}$ on the top right panel) have the same signs and the blue ones are where the signs are opposite. 
One can cross-verify and compare the numerical findings with those presented in Theorem \ref{thm-JLZ}. Additionally, the numerical investigations showcased in the following figures reveal more intriguing outcomes.

Following that, the bottom plot of Figure \ref{Fig-Heatmap_J-1} highlights areas of overlap where both $J_{10}\cdot J_{11}$ and $J_{20}\cdot J_{21}$ share the same sign. The color scheme can be interpreted as follows: The red regions indicate areas where (small) positive $Q_0$ strengthens both $|J_1|$ and $|J_2|$, the blue regions denote areas where (small) positive $Q_0$ reduces both $|J_1|$ and $|J_2|$, and the purple regions represent areas where (small) positive $Q_0$ strengthens one of $|J_1|$ or $|J_2|$ while reducing the other.


	\begin{figure}[htbp]
 	\centering
 	\begin{subfigure}[b]{.87\textwidth}
    \includegraphics[width=\textwidth, height=2.in]{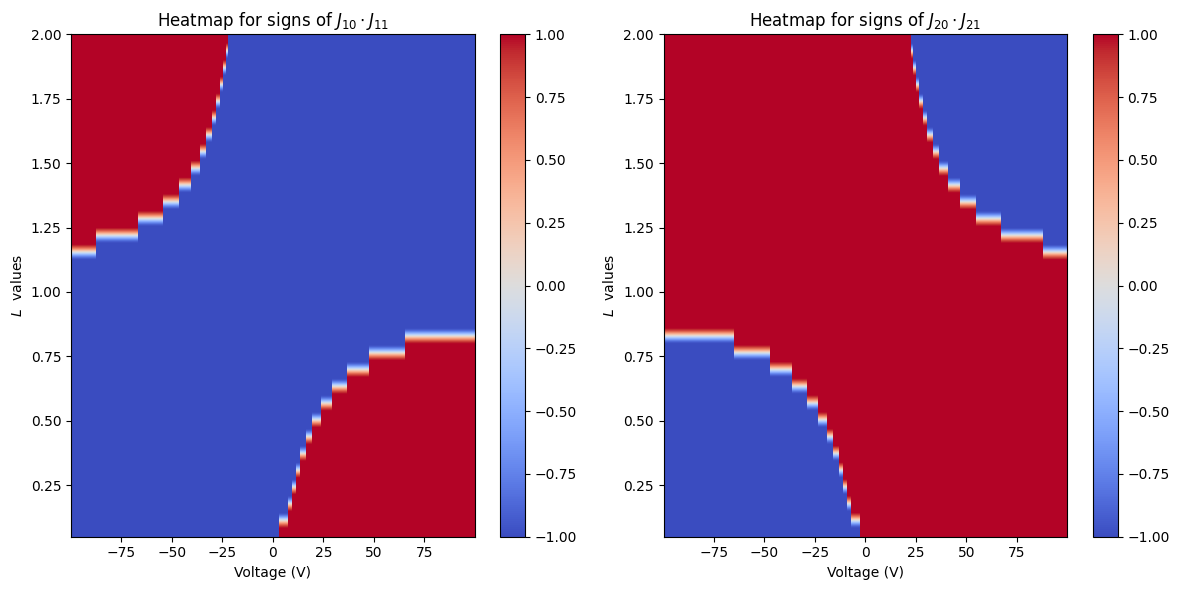}
\end{subfigure}
  	\begin{subfigure}{.46\textwidth}
 		\includegraphics[width=\textwidth, height=2.2in]{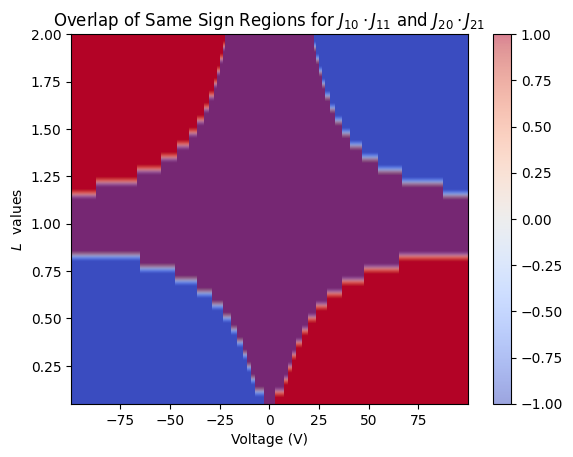}
 	\end{subfigure}	
 	\caption{\em  Visualization of heatmaps and overlapped regions indicating the sign agreement for the products $J_{10}\cdot J_{11}$ and $J_{20}\cdot J_{21}$. The concentration $L$ varies from zero to two while $R$ is fixed at $1$, shedding light on the impact of linear terms.
 	 }
 	 \label{Fig-Heatmap_J-1}
 \end{figure}
 

	\begin{figure}[h]
 	\centering
 	\begin{subfigure}[b]{.87\textwidth}
    \includegraphics[width=\textwidth, height=2.in]{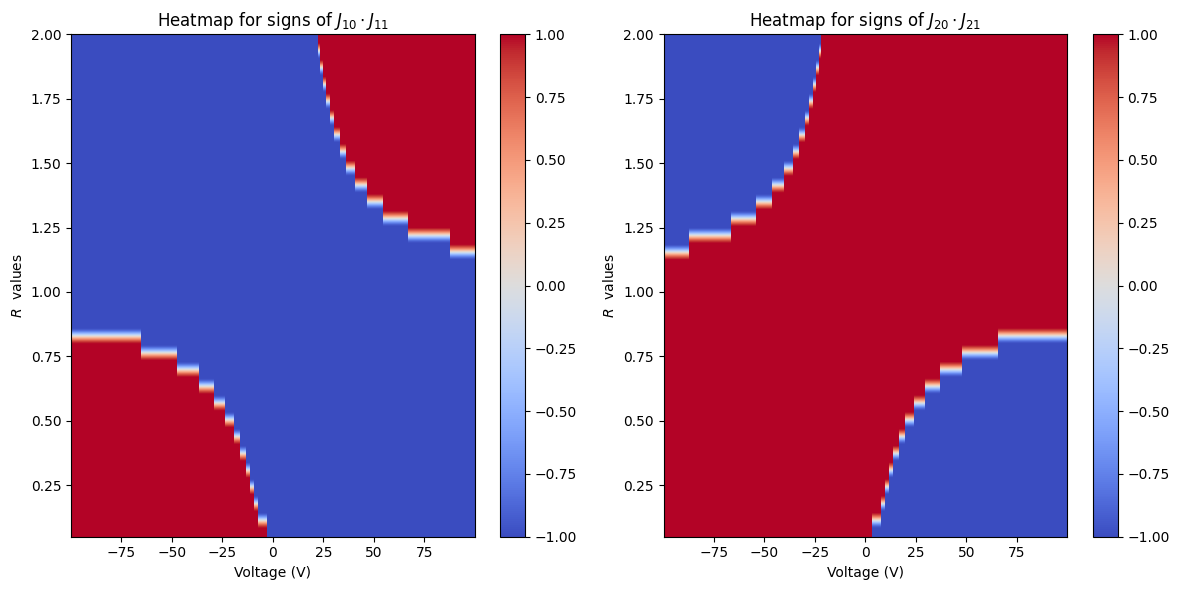}
\end{subfigure}
  	\begin{subfigure}{.46\textwidth}
 		\includegraphics[width=\textwidth, height=2.2in]{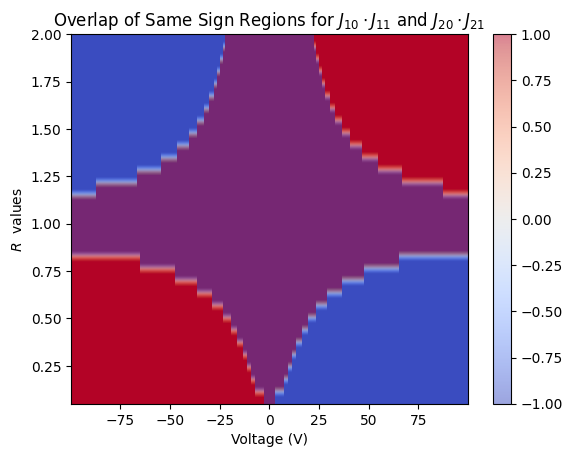}
 	\end{subfigure}	
 	\caption{\em  Heatmaps and overlaps of regions with same sign for $J_{10}\cdot J_{11}$ and  $J_{20}\cdot J_{21}$, with varying concentration $R$ from zero to two and fixed concentration $L=1$, to study the role of linear terms.
 	 }
 	 \label{Fig-Heatmap_J-2}
 \end{figure}
The theoretical analysis of complex second-order terms in equations provided in Proposition \ref{prop-lastJ} is challenging. As a result, we use computational methods to explore how permanent charges affect ion movement and the membrane's electrical behavior, focusing on the current-voltage (I-V) relation.
We analyze and compare these outcomes to scenarios without permanent charges, examining how these differences affect membrane performance. Then we study higher order contributions of permanent charges. Our numerical investigation delves into understanding the intricate interactions of permanent charges, shedding light on their influence on crucial electrical properties. Through this exploration, our aim is to advance our comprehension of the system's behavior and offer valuable insights to the academic community.

\subsection{Beyond the Basics: Investigating Second-Order Permanent Charge Effects on I-V Relations.}\label{sec-num2}
In this section, we explore the implications of introducing the $Q^2$ term into the expressions.  Additional comprehensive and noteworthy findings have been uncovered. Utilizing heatmaps to analyze the signs of $J_{10} \cdot J_{11} \cdot J_{12}$ and $J_{20} \cdot J_{21} \cdot J_{22}$ has unveiled more subtle insights, particularly emphasizing the distinct role played by the $Q^2$ terms in shaping the outcomes. Here, however, through Figure \ref{Fig-Q2-Linear-Quad} we first illustrate the transformative effects of incorporating the $Q^2$ term, shifting the behavior from linear to quadratic. We then would like to highlight that (small) positive $Q_0$ may reduce or strengthen $|J_1|$ and $|J_2|$ through second-order terms $Q_0^2$.

	\begin{figure}[htbp]
 	\centering
 	\begin{subfigure}{.8\textwidth}
 		\includegraphics[width=\textwidth]{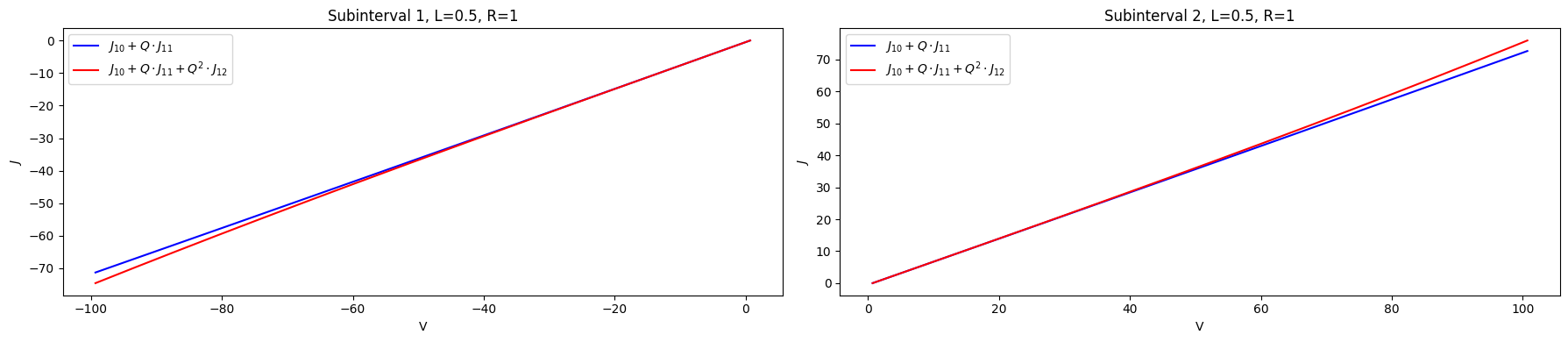}
 	\end{subfigure}
 	\begin{subfigure}{.8\textwidth}
 		\includegraphics[width=\textwidth]{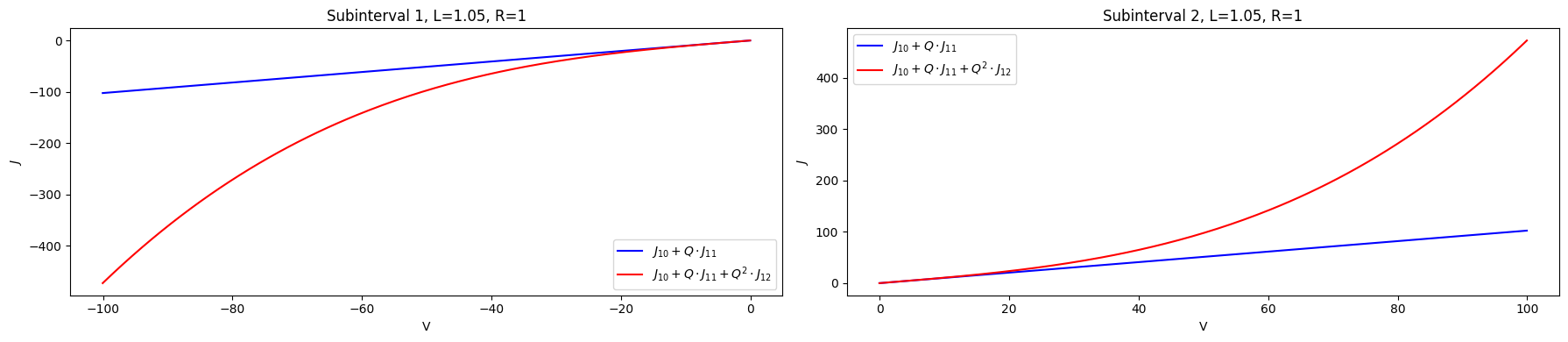}
 	\end{subfigure}
 	\caption{\em  Linear and quadratic approximations of flux $J_1$ for boundary concentrations $L=0.5, R=1$ (distant) and $L=1.05, R=1$ (close) with enhanced $|J_1|$ due to $Q_0^2$ terms.
 	 }
 	 \label{Fig-Q2-Linear-Quad}
 \end{figure}
 

Figure \ref{Fig-Q2-Linear-Quad} has limitations as it represents a specific case, making it non-representative of other scenarios. Even with similar figures, it struggles to clearly convey whether the quadratic term diminishes or enhances the flux. To address this, Figure \ref{Fig-Q2-Jprod-L0.97-1.07R1} offers more informative outcomes by depicting the products of $(J_{10} + Q \cdot J_{11})\cdot J_{12}$ and $(J_{20} + Q \cdot J_{21})\cdot J_{22}$.

 
\begin{figure}[htbp]
 	\centering
 	\begin{subfigure}{.8\textwidth}
 		\includegraphics[width=\textwidth]{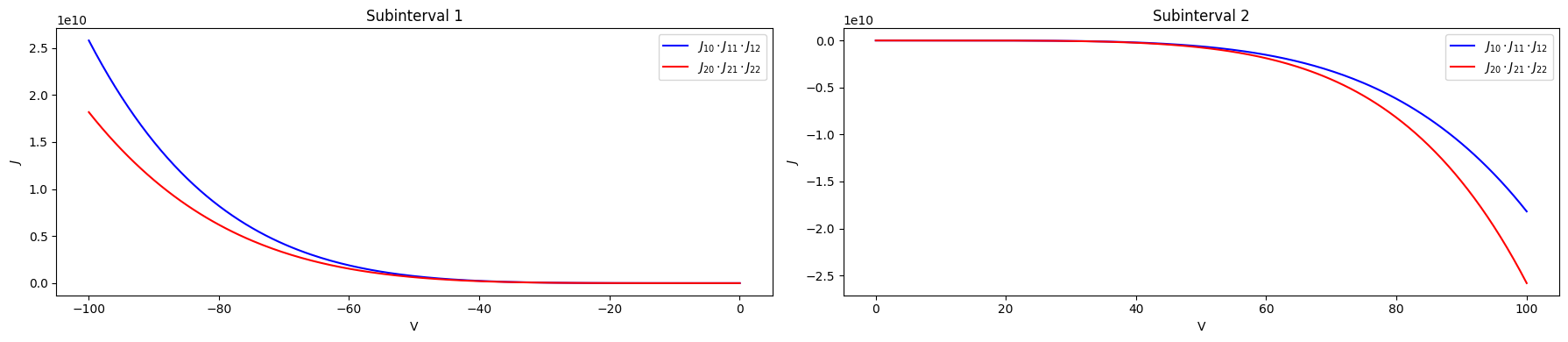}
 	\end{subfigure}
 	\begin{subfigure}{.8\textwidth}
 		\includegraphics[width=\textwidth]{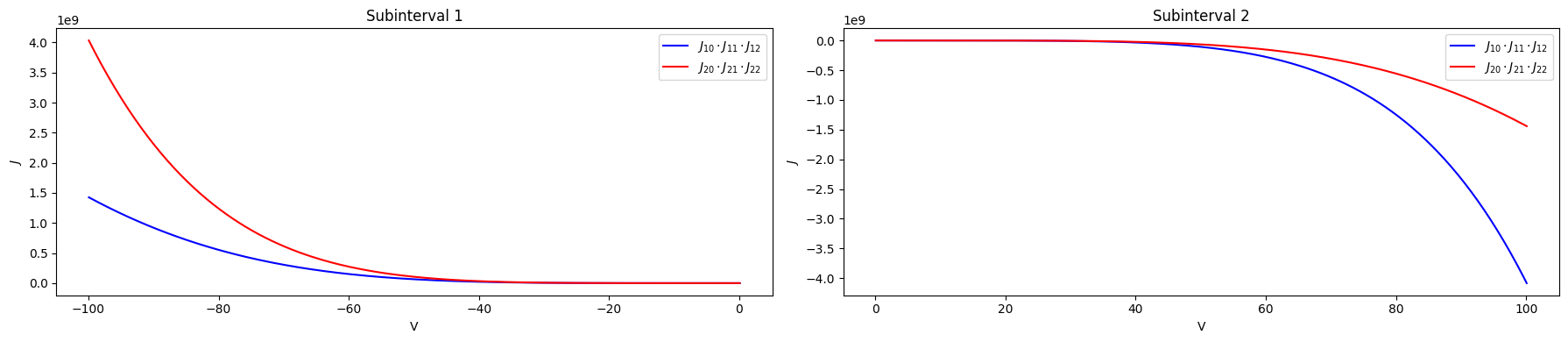}
 	\end{subfigure}
 	\caption{\em  Linear and quadratic approximations of flux $J_1$ for the boundary concentrations $L=0.5, R=1$ (distant) and $L=1.05, R=1$ (close) as a case where $Q_0^2$ terms strengthen $|J_1|$.
 	 }
 	 \label{Fig-Q2-Jprod-L0.97-1.07R1}
 \end{figure}
 
\section{Concluding Remarks and Future Work.}\label{sec-conclusion}
In this study, we presented a comprehensive exploration of ion channel dynamics, focusing on the intricate influence of permanent charges. Theoretical and numerical analyses have been combined to unveil the qualitative shifts in fluxes, boundary concentrations, and electric potentials at higher-order contributions of permanent charge. The investigation has delved into the subtle interplay between boundary conditions and channel geometry, elucidating the nuanced impact of permanent charges on ion channel behavior.
Our findings contribute to the understanding of ion electrodiffusion, shedding light on the complex interactions that arise due to permanent charges. The systematic perturbation analysis, spanning zeroth, first, and second-order solutions, has provided valuable insights into the behavior of the system under the influence of small permanent charges.
As we conclude this study, avenues for further research emerge.

Exploring Local Hard-Sphere PNP systems, which account for finite ion sizes, offers valuable insights into the dynamics of ionic channels by considering ion sizes $d$ \cite{LM22}. However, the computations become more complex in this case. A fascinating aspect of this study involves investigating higher-order solutions concerning ion size $d$ and permanent charge $Q_0$, specifically deriving $Q_0^2, Qd,$ and $d^2$ solutions. we derived solutions involving $Q_0^2$ in this manuscript. The work presented in \cite{FLMZ22} delves into the higher-order effects of ion size and provides $d^2$ solutions. Additionally, the paper \cite{LM22} examines PNP models with ion size and permanent charge, and to complete the puzzle, one must carefully derive $Qd$ terms from that paper. By assembling all these quadratic terms, a more accurate exploration of the higher-order impacts of ion size and permanent charge becomes possible.

Additionally, the application of advanced numerical techniques and simulations may offer a more detailed understanding of ion channel behavior in complex biological environments.
Further investigations could also delve into the impact of permanent charges on specific ion channel types, allowing for a more targeted analysis of their behavior. Moreover, experimental validation and comparison with existing biological data would provide a bridge between theoretical insights and real-world observations, enhancing the practical relevance of our findings.

\section{Acknowledgment. } 
The higher-order solutions presented in this manuscript smoothly extend from the foundational lower-order results, employing a consistent approach and methodology. The solutions maintain regularity concerning the permanent charge. To enhance clarity, certain intricate calculations, extensive in nature, have been intentionally omitted from the manuscript. Readers or reviewers with inquiries about specific sections of this manuscript are welcome to contact the authors for further clarification. The authors are readily available to provide detailed explanations and assistance as needed.


\end{document}